\newtheorem{theorem}{Theorem}[section]
\newtheorem{proposition}[theorem]{Proposition}
\newtheorem{conjecture}[theorem]{Conjecture}
\newtheorem{definition}[theorem]{Definition}
\newtheorem{lemma}[theorem]{Lemma}
\newtheorem{remark}[theorem]{Remark}
\newtheorem{problem}[theorem]{Problem}
\newtheorem{example}[theorem]{Example}
\newcommand{\KX}{K\langle X\rangle}
\title[Images of multilinear polynomials on $UT_n(K)$]{Images of multilinear polynomials on $n\times n$ upper triangular matrices over infinite fields}
\author[I. G. Gargate]{Ivan Gonzales Gargate}
\address{Universidade Tecnol\'ogica Federal do Paran\'a, campus Pato Branco}
\email{ivangargate@utfpr.edu.br}
\author[T. C. de Mello]{Thiago Castilho de Mello}
\address{Universidade Federal de S\~ao Paulo, Instituto de Ci\^encia e Tecnologia}
\email{tcmello@unifesp.br}
\date{December 2020}
\keywords{images of polynomials, multilinear polynomials, upper triangular matrices, Lvov-Kaplansky conjecture.}
\subjclass{16S50, 16R10, 15A54}
\begin{document}

\maketitle 

\begin{abstract}
  In this paper we prove that the image of multilinear polynomials evaluated on the algebra $UT_n(K)$ of $n\times n$ upper triangular matrices over an infinite field $K$ equals $J^r$, a power of its Jacobson ideal $J=J(UT_n(K))$. In particular, this shows that the analogue of the Lvov-Kaplansky conjecture for $UT_n(K)$ is true, solving a conjecture of Fagundes and de Mello. To prove that fact, we introduce the notion of \emph{commutator-degree} of a polynomial and characterize the multilinear polynomials of commutator-degree $r$ in terms of its coefficients. It turns out that the image of a multilinear polynomial $f$ on $UT_n(K)$ is $J^r$ if and only if $f$ has commutator degree $r$.
\end{abstract}

\section{Introduction}

Let $K$ be an infinite field and let $M_n(K)$ denote the algebra of $n\times n$ matrices over $K$. A famous problem known as Lvov-Kaplansky conjecture asserts: the image of a multilinear polynomial (in noncommutative variables) on $M_n(K)$ is a vector space. It is well-known that this is equivalent to prove that the image of a multilinear polynomial is one of the following: $\{0\}$, $K$ (viewed as the set of scalar matrices), $sl_n(K)$ (the set of traceless matrices) or $M_n(K)$.

Although proving that some subset is a vector space seems to be in a first look a simple problem, a solution for the Lvov-Kaplansky conjecture is known only for $n=2$ \cite{Kanel2, Malev}. The case $n=3$ has interesting progress, but not a solution \cite{Kanel3}. 
This conjecture motivated other studies related to images of polynomials. For instance, papers on images of Lie, and Jordan polynomials, and also for other algebras have been published since then. For the most recent results on images of polynomials, we recommend the survey \cite{Survey}.

The particular case of $n\times n$ upper triangular matrices over a field $K$, $UT_n(K)$, was first studied by the second named author and Fagundes in \cite{FagundesdeMello}. The authors proved that if $K$ is infinite, the image of a multilinear polynomial of degree up to four on $UT_n(K)$ is  $\{0\}$, $J$ or $J^2$, where $J=J(UT_n(K))$ is the Jacobson radical of $UT_n(K)$, which consists of the strictly upper triangular matrices. In the same paper, the authors conjecture that the image of a multilinear polynomial on $UT_n(K)$ is a vector space. After proving that the linear span of the image of an arbitrary multilinear polynomial on $UT_n(K)$ equals $J^r$ for some $r$, they restate the above conjecture as:

\begin{conjecture}\label{LKUTn}
    Let $K$ be an infinite field and let $f\in \KX$ be a multilinear polynomial. Then the image of $f$ on $UT_n(K)$ equals $J^r$ for some $r$. 
\end{conjecture}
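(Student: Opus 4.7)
The plan is to introduce an invariant $\operatorname{cd}(f)$, the \emph{commutator-degree} of a multilinear polynomial $f$, and to prove Conjecture \ref{LKUTn} by establishing that $\operatorname{Im}(f|_{UT_n(K)}) = J^{\operatorname{cd}(f)}$. Writing $f = \sum_{\sigma \in S_m} \alpha_\sigma x_{\sigma(1)} \cdots x_{\sigma(m)}$, the invariant $\operatorname{cd}(f)$ should measure how deeply $f$ lies in the powers of the commutator ideal of $\KX$: thus $\operatorname{cd}(f) \ge r$ iff $f$ can be expressed as a $K$-linear combination of monomials containing $r$ commutator factors. A preliminary technical task, and the combinatorial heart of the argument, is to characterize $\operatorname{cd}(f) \ge r$ via an explicit system of linear relations on the coefficients $\alpha_\sigma$ --- generalizing the classical fact that $\operatorname{cd}(f) \ge 1$ iff $\sum_\sigma \alpha_\sigma = 0$ (which is the multilinear form of vanishing on commutative algebras).

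The inclusion $\operatorname{Im}(f) \subseteq J^{\operatorname{cd}(f)}$ is the easier half. It rests on the fact that $[A,B] \in J$ for all $A, B \in UT_n(K)$ and, more generally, that pre- or post-multiplying a product of $s$ commutators by any element of $UT_n(K)$ stays inside $J^s$. Expanding an expression that realizes $f$ as a combination of monomials containing $r$ commutators and evaluating on $UT_n(K)$ therefore automatically lands in $J^r$.

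The reverse inclusion $J^{\operatorname{cd}(f)} \subseteq \operatorname{Im}(f)$ is the main difficulty and is where the infinite-field hypothesis is crucial. The prior result of Fagundes--de Mello, that the $K$-linear span of $\operatorname{Im}(f)$ equals some $J^r$, both pins down $r = \operatorname{cd}(f)$ and reduces the task to showing that $\operatorname{Im}(f)$ is already a $K$-subspace. Closure under scalar multiplication is immediate from multilinearity; for closure under addition, I would substitute $x_i \mapsto A_i + t B_i$ and observe that the result is a polynomial in $t$ whose coefficients are mixed evaluations of $f$. Because $K$ is infinite, a Vandermonde interpolation over sufficiently many values of $t$ recovers arbitrary $K$-linear combinations of these mixed evaluations. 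The coefficient identities encoded by $\operatorname{cd}(f) = r$ then collapse the genuinely mixed terms, leaving exactly the required sum as a single evaluation of $f$.

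The main obstacle I expect is discovering the correct coefficient characterization of commutator-degree and verifying that it interacts cleanly with the Vandermonde step, so that the mixed terms really do cancel. A secondary difficulty is producing each individual generator $E_{i,i+r}, E_{i,i+r+1}, \dots, E_{1,n}$ of $J^r$ from an explicit evaluation of $f$ on appropriately chosen matrix units; I would handle these as base cases, and then iterate the addition-closure argument to assemble arbitrary elements of $J^r$ from them.
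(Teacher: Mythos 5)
Your setup (the commutator-degree invariant, its coefficient characterization, and the easy inclusion $\operatorname{Im}(f)\subseteq J^{r}$) matches the paper's strategy. But the proposed proof of the hard inclusion $J^{r}\subseteq\operatorname{Im}(f)$ has a genuine gap. Substituting $x_i\mapsto A_i+tB_i$ and interpolating over many values of $t$ only shows that each coefficient of the resulting polynomial in $t$ lies in the \emph{linear span} of $\operatorname{Im}(f)$ --- each coefficient is recovered as a $K$-linear combination of several evaluations $f(A_1+t_jB_1,\dots)$, not as a single evaluation. Since the whole point is to prove that $\operatorname{Im}(f)$ is closed under addition, i.e.\ that such combinations are themselves single evaluations, this is circular. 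Moreover, the claim that the coefficient identities of commutator-degree $r$ ``collapse the genuinely mixed terms'' is false already in the simplest case: for $f=[x_1,x_2]$ (commutator-degree $1$) one has
\[
f(A_1+tB_1,\,A_2+tB_2)=[A_1,A_2]+t\bigl([A_1,B_2]+[B_1,A_2]\bigr)+t^{2}[B_1,B_2],
\]
and the mixed term does not vanish; the evaluation at $t=1$ is $[A_1+B_1,A_2+B_2]$, which is generally not $[A_1,A_2]+[B_1,B_2]$. So closure under addition cannot be extracted this way, and the plan to build arbitrary elements of $J^{r}$ by iterating addition-closure from matrix-unit evaluations inherits the same defect.

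The paper avoids this entirely by a direct surjectivity argument: given an arbitrary target $A\in J^{r}$, it evaluates $f$ on \emph{generic} matrices (diagonal matrices $D_i$, superdiagonal matrices $N_i$, and one full generic upper triangular matrix $X$ placed in the positions prescribed by a tuple $(T,t)$ with $\beta^{(r,T,t)}\neq0$), reducing $f(c_1,\dots,c_m)=A$ to a triangular linear system in the entries of $X$ whose pivot coefficients $\Delta_{r,q,k}$ are nonzero polynomials in the generic parameters --- precisely because the distinguished coefficient $\gamma_{i_0,\mathrm{id}}$ equals $\beta^{(r,T,t)}\neq0$ and distinct index tuples contribute monomials of distinct multidegrees, so no cancellation can occur. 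The infinite-field hypothesis then enters only through the elementary fact that a nonzero polynomial over an infinite field has a nonvanishing point. You would need to replace your interpolation step with some such explicit construction (or another mechanism that produces a \emph{single} evaluation hitting a prescribed element of $J^{r}$) for the proof to go through.
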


In \cite{deMello}, the second named author verified this conjecture for $n=3$ and polynomials of arbitrary degree but up to now a complete solution for this conjecture is not known.

The paper \cite{FagundesdeMello} also motivated further research on images of polynomials on $UT_n(K)$. For instance, the images of multilinear polynomials on $J$ (and also on $J^k$, for $k\geq 1$) were completely described by Fagundes in \cite{Fagundes}. In \cite{WangHomogeneous} the authors give a complete description of the images of completely homogeneous polynomials on $UT_2$ and in \cite{WangArbitrary} they give a complete description of the image of polynomials with zero constant term on $UT_2$ when $K$ is an algebraically closed field. Some results were also obtained in the nonassociative setting. Let $K(n,*)$ denote the Lie algebra of $n\times n$ skew-symmetric elements of $UT_n(K)$ with respect to an involution of $UT_n(K)$. In \cite{FrancaUrure1} the authors give a complete description of the image of multihomogeneous Lie polynomials evaluated on $K(3,*)$ and $K(4,*)$ and of all multilinear Lie polynomials whose image evaluated on $K(m,*)$ is the set of strictly upper triangular skew-symmetric matrices (with respect to transpose-like and symplectic-like involutions). In \cite{FrancaUrure2} the authors proved that the image of any multilinear Jordan polynomial with at most three variables evaluated on the Jordan algebra $S(n,*)$ (of $n\times n$ symmetric elements of $UT_n(K)$ with respect to the transpose-like involution on $UT_n(K)$) is a vector space.

In this paper, we prove Conjecture \ref{LKUTn}, i.e., that the image of an arbitrary multilinear polynomial over an infinite field $K$ is $J^r$ for some $r$. To that we introduce the notion of \emph{commutator-degree} of a polynomial and classify the polynomials of a given commutator degree in terms of its coefficients. Finally, we prove that a multilinear polynomial has commutator degree $r$ if and only if its image on $UT_n(K)$ is $J^r$, i.e., our result not only describes the possible images of a given multilinear polynomial, but also for each possible image, it describes the set of multilinear polynomials with that image.

\section{Preliminaries}

In this paper, unles otherwise stated, $K$ will denote an infinite field of arbitrary characteristic and all algebras are associative.

If $X$ is a set, we denote by $\KX$ the free associative algebra freely generated by $X$. The elements of $\KX$ are polynomials in the noncommutative variables of $X$. 

Given an algebra $A$ over $K$, and a polynomial $f=f(x_1, \dots, x_m)\in \KX$, we denote also by $f$ the map \[\begin{array}{cccc}
     f: & A^m & \longrightarrow & A \\
        & (a_1,\dots, a_m) & \mapsto & f(a_1, \dots, a_m) 
\end{array}\] 
The image of such map is called \emph{the image of $f$ on $A$} and will be denoted by $f(A)$.

Image of polynomials have been studied long time before in the language of polynomial identities and central polynomials. If $A$ is an algebra over $K$, a polynomial $f\in \KX$ is a polynomial identity (PI) for $A$ if $f$ vanishes under substitutions of variables by elements of $A$. If any evaluation of $f$ by elements of $A$ yields central elements of $A$, we say that $f$ is a central polynomial. In the language of images of polynomials, a polynomial identity of $A$ is a polynomial whose image is $\{0\}$ and a central polynomial of $A$ is a polynomial whose image is contained in the center of $A$. An algebra $A$ satisfying a nonzero polynomial identity is called a PI-algebra.

For a given PI-algebra $A$, the set $Id(A) = \{f\in \KX \,|\, f \text{ is an identity of A}\}$ is an ideal of $\KX$ which is invariant under any endomorphism of the algebra $\KX$, i.e., it is invariant under substitution of variables by arbitrary elements of $\KX$. An ideal with this property is called a T-ideal of $\KX$. We refer the reader to \cite{Drensky, G-Z} for the basic theory of PI-algebras. 

Given a set of polynomials $\mathcal F \subseteq \KX$, we say the ideal $I$ is the T-ideal generated by $\mathcal F$, if $I$ is the smallest T-ideal containing $\mathcal F$. In this case we say $\mathcal F$ is a basis for $I$ or that the elements of $I$ follow from the elements of $\mathcal F$ and we denote $I=\langle \mathcal F \rangle ^T$.


Throughout the paper $UT_n(K)$ will denote the set of upper triangular matrices over a given field $K$. If $1\leq i \leq j \leq n$, we denote by $E_{i,j}$ the $n\times n$ matrix with $1$ in the entry $(i, j)$, and $0$ elsewhere. These will be called matrix units. In particular, the $E_{i,j}$ with $1\leq i \leq j \leq n$ is a basis for the vector space $UT_n(K)$. 

The set of strictly upper triangular matrices is the Jacobson radical of $UT_n(K)$. It is an ideal of $UT_n(K)$ and will be denoted by $J$. If $r\in\{1, \dots, n\}$, the power $J^r$ is the linear span of the set of all $E_{i,j}$ with $j-i \geq r$. For convenience, we denote $J^0=UT_n(K)$.

A finite basis of identities of $UT_n(K)$ were first described by Yu. N. Maltsev in the characteristic zero case. This result was proved by many other authors for arbitrary infinite fields. The classical proof can be found in \cite[Chapter 5]{Drensky}. An interesting proof based on a theorem J. Lewin can be found in \cite[Chapter 1]{G-Z}. 

\begin{theorem}\label{IDsUTn}
If $K$ is an infinite field, the polynomial identities of $UT_n(K)$ follow from the identity
		\[[x_1, x_2]\cdots [x_{2n-1}, x_{2n}] = 0\]
\end{theorem}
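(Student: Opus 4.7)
The proof has two parts. The easy direction is that $c_n := [x_1,x_2]\cdots[x_{2n-1},x_{2n}]$ vanishes on $UT_n(K)$: for any $A,B \in UT_n(K)$, the commutator $[A,B]$ has zero diagonal and so lies in $J$, whence a product of $n$ such commutators lies in $J^n = 0$. The harder direction is that every identity of $UT_n(K)$ follows from $c_n$. Writing $I = \langle c_n\rangle^T$, since $K$ is infinite both $I$ and $Id(UT_n(K))$ are generated, as T-ideals, by their multilinear components. Letting $P_m$ denote the space of multilinear polynomials in $x_1,\ldots,x_m$, it suffices to establish
$$\dim \frac{P_m}{P_m\cap I} \leq \dim \frac{P_m}{P_m\cap Id(UT_n(K))} = c_m(UT_n(K))$$
for every $m$, since the reverse inequality follows from the easy direction.

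The plan is to produce an explicit spanning set for $P_m$ modulo $I$ of the correct cardinality. Using the relation $xy = yx + [x,y]$ iteratively, every multilinear monomial is equivalent, modulo the commutator T-ideal, to a sum of normal forms
$$x_{i_1}\cdots x_{i_a}\cdot [x_{j_1},x_{j_2}]\cdots[x_{j_{2s-1}},x_{j_{2s}}],$$
where $\{i_1,\ldots,i_a\}\cup \{j_1,\ldots,j_{2s}\} = \{1,\ldots,m\}$ is a partition, the prefix has $i_1<\cdots<i_a$, and the commutator pairs are arranged in some fixed order (say $j_{2\ell-1}<j_{2\ell}$ with lexicographic ordering of pairs). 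By T-closure of $c_n$, any product of $n$ or more basic commutators lies in $I$, so only $s\leq n-1$ contributes modulo $I$. This yields a combinatorially explicit spanning set.

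The main obstacle is to match the size of this spanning set with $c_m(UT_n(K))$. One direct route is to evaluate the normal forms on carefully chosen substitutions by matrix units $E_{p,q}$ (roughly, placing $E_{p_\ell,p_\ell+1}$ at the positions of the commutators so that a single ordering of the $j$'s survives) and to show linear independence of the resulting matrices; the count of normal forms then coincides with $c_m(UT_n(K))$, forcing both inequalities to be equalities. A cleaner alternative is to invoke Lewin's theorem on T-ideal products: for $K$-algebras $A_1, A_2$ with $Id(A_i) = I_i$ and a sufficiently free $(A_1,A_2)$-bimodule $M$, the triangular algebra with diagonal $A_1, A_2$ and off-diagonal entries in $M$ has T-ideal of identities equal to $I_1\cdot I_2$. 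Iterating this $n-1$ times from $A_i = K$ (with $Id(K) = \langle [x_1,x_2]\rangle^T$) realizes $UT_n(K)$ and yields $Id(UT_n(K)) = \langle [x_1,x_2]\rangle^T \cdots \langle [x_1,x_2]\rangle^T = \langle c_n\rangle^T$.
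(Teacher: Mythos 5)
The paper does not prove this statement at all: it is quoted as a known theorem of Maltsev, with references to \cite{Drensky} (Chapter 5, the classical proof) and \cite{G-Z} (Chapter 1, the proof via Lewin's theorem). Your second route --- iterating Lewin's theorem on products of T-ideals of triangular constructions --- is exactly the argument outlined in the second of those references, so as a ``proof by citation'' it is the accepted path; note, though, that it is not a one-line application, since the off-diagonal bimodule of $UT_n(K)$ is not free, and one must pass through a relatively free model (or the Giambruno--Zaicev refinement $Id(UT(d_1,\dots,d_s))=Id(M_{d_1})\cdots Id(M_{d_s})$) before concluding, and one must also check the easy identification $\langle [x_1,x_2]\rangle^T\cdots\langle [x_1,x_2]\rangle^T=\langle [x_1,x_2]\cdots[x_{2n-1},x_{2n}]\rangle^T$ of the ideal product with the T-ideal generated by the product of the generators.

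Your first route, however, contains a genuine gap: the claimed normal form is wrong. Products of an increasing prefix $x_{i_1}\cdots x_{i_a}$ with degree-two commutators $[x_{j_1},x_{j_2}]\cdots[x_{j_{2s-1}},x_{j_{2s}}]$ do \emph{not} span $P_m$ modulo $I=\langle c_n\rangle^T$. Take $n=2$ and $m=3$. Your spanning set is $\{x_1x_2x_3,\ x_1[x_2,x_3],\ x_2[x_1,x_3],\ x_3[x_1,x_2]\}$, which spans a $4$-dimensional subspace of the $6$-dimensional space $P_3$; but $P_3\cap\langle [x_1,x_2][x_3,x_4]\rangle^T=0$ (every multilinear consequence of the generator has degree at least $4$), so $\dim P_3/(P_3\cap I)=6$ and the element $[[x_1,x_2],x_3]$ is not reached. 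Indeed $c_3(UT_2(K))=6>4$, so your proposed count cannot match the codimension and the inequality you need fails. The repair is the classical one: the correct spanning set (this is the content of \cite[Chapter 5]{Drensky}) uses \emph{left-normed long commutators} $[x_{k_1},x_{k_2},\dots,x_{k_p}]$ of arbitrary length $p\geq 2$, with the indices inside each commutator and the commutators themselves suitably ordered, and with at most $n-1$ commutator factors modulo $I$; only with long commutators does the count of normal forms equal $c_m(UT_n(K))$, and the linear-independence step via matrix-unit substitutions then goes through. The rewriting $xy=yx+[x,y]$ alone does not produce your normal form, because moving a variable past a commutator creates a nested commutator $[[x,y],z]$ that cannot be discarded modulo $I$.
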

Here $[x,y]:=xy-yx$ denotes the commutator of $x$ and $y$.

A polynomial $f(x_1, \dots, x_m)\in \KX$ is called multihomogneous of multidegree $(n_1, \dots, n_m)$ if in each monomial of $f$, the variable $x_i$ has degree $n_i$. The polynomial $f$ is called multilinear if it is multihomogeneous of degree $(1, \dots, 1)$. It is clear that if the polynomial $f$ is multilinear, the corresponding map $f$ is a multilinear map between the vector spaces $A^m$ and $A$. Also, if $f(x_1, \dots, x_m)$ is multilinear, one can write 
\[f(x_1, \dots, x_m)=\sum_{\sigma\in S_m}\alpha_{\sigma}x_{\sigma(1)}\cdots x_{\sigma(m)},\]
for some $\alpha_{\sigma}\in K$, where $S_m$ denotes the symmetric group on $\{1, \dots, m\}$.

It is well known that if $K$ is an infinite field, any T-ideal is generated by multihomogeneous polynomials, and if $K$ has characteristic zero, by multilinear polynomials. This follows from the following fact which can be found in \cite{BresarKlep}.

\begin{lemma}
    Let $V$ be a vector space over $K$ and $U$ be a subspace of $V$. Let $c_0, c_1, \dots, c_n \in V$ such that $\sum_{i=0}^n\lambda^ic_i\in U$ for at least $n+1$ pairwise distinct scalars $\lambda$.    Then $c_i\in U$ for each $i\in\{0, \dots, n\}$.
\end{lemma}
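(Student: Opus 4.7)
The plan is to reduce the statement to the classical invertibility of the Vandermonde matrix. By hypothesis, I can pick $n+1$ pairwise distinct scalars $\lambda_0, \dots, \lambda_n \in K$ such that each vector
\[ w_j := \sum_{i=0}^{n} \lambda_j^{i}\, c_i \]
lies in $U$, for $j=0,\dots,n$. The goal is then to solve this linear system for the $c_i$'s.

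The key step is to observe that the coefficient matrix of this system is the Vandermonde matrix $M = (\lambda_j^{i})_{0 \le i,j \le n}$, whose determinant equals $\prod_{0 \le j < k \le n}(\lambda_k - \lambda_j)$ and is therefore nonzero, since the $\lambda_j$ were chosen pairwise distinct. Consequently $M$ is invertible over $K$, and its inverse has entries in $K$. This will allow me to express each $c_i$ as a $K$-linear combination of the vectors $w_0, \dots, w_n$. Because each $w_j$ lies in $U$ and $U$ is a subspace of $V$, it follows that $c_i \in U$ for every $i$.

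There is no real obstacle here: the argument is standard linear algebra, and the content of the lemma is just that a polynomial in a formal variable $\lambda$ with vector coefficients, whose specializations at sufficiently many scalar values lie in a fixed subspace, must already have all of its coefficients in that subspace. The only subtle point worth flagging is that the hypothesis tacitly requires $K$ to contain at least $n+1$ distinct elements, which is the reason the paper's standing assumption that $K$ is infinite enters when the lemma is applied, for example, to reduce T-ideal computations to multihomogeneous or multilinear generators.
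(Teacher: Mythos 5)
Your Vandermonde argument is correct and is the standard proof of this fact; the paper itself gives no proof, only a citation to Bre\v{s}ar--Klep, and the expected argument there is exactly the one you give. Nothing to add.
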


In this paper we also need the notion of Lie ideal of an associative algebra $A$. If $I\subseteq A$ is a vector subspace of $A$, we say that $I$ is a Lie ideal of $A$ if for any $x\in A$ and $y\in I$, we have $[x,y]\in A$. As examples, the sets $\{0\}$, $K$, $sl_n(K)$ and $M_n(K)$ are Lie ideals of $M_n(K)$. Actually, these were shown to be the only Lie ideals of $M_n(K)$, if the characteristic of $K$ is not 2 or $n\neq 2$ (see \cite{Herstein}). 

The Lie ideal structures appear when studying images of polynomials. The following is a particular case of \cite[Theorem 2.3]{BresarKlep}.

\begin{theorem}\label{LieIdeal}
    Let $K$ be an infinite field, and let $A$ be an algebra over $K$. Then for every polynomial $f = f(x_1, \dots, x_m)\in \KX$, the linear span of $f(A)$ is a Lie ideal of A.
\end{theorem}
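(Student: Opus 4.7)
The plan is to show that $[a,v]\in V$ for every $a\in A$ and every $v\in V$, where $V$ denotes the linear span of $f(A)$. By linearity of the commutator in its second argument and the definition of $V$, it suffices to show that $[a, f(a_1,\dots,a_m)]\in V$ for arbitrary $a, a_1,\dots, a_m\in A$.

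The main idea is to introduce a one-parameter perturbation
$$G(t) := f\bigl((1+ta)a_1(1-ta),\; (1+ta)a_2(1-ta),\;\dots,\;(1+ta)a_m(1-ta)\bigr),$$
and to recover the desired commutator as the coefficient of $t$ in $G(t)$. For every fixed scalar $t\in K$, the element $G(t)$ is a genuine evaluation of $f$ on $A$, hence $G(t)\in f(A)\subseteq V$. Expanding monomial by monomial, one sees that $G(t)$ depends polynomially on $t$: write $G(t)=\sum_{i=0}^{N}t^i G_i$ with $G_i\in A$ independent of $t$. Since $K$ is infinite, applying the preliminary lemma to the subspace $V\subseteq A$ and to any $N+1$ distinct values of $t$ yields $G_i\in V$ for every $i$; in particular $G_1\in V$.

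The second step identifies $G_1$ with $[a,f(a_1,\dots,a_m)]$. The key observation is $(1-ta)(1+ta)=1-t^2a^2\equiv 1\pmod{t^2}$, so within any monomial of $G(t)$ each pair $(1-ta)(1+ta)$ squeezed between two consecutive variables telescopes modulo $t^2$. A single monomial $x_{i_1}\cdots x_{i_k}$ of $f$ thus contributes
$$(1+ta)a_{i_1}(1-ta)\cdots(1+ta)a_{i_k}(1-ta)\;\equiv\;(1+ta)\,a_{i_1}\cdots a_{i_k}\,(1-ta)\;\equiv\; a_{i_1}\cdots a_{i_k}+t\,[a,\,a_{i_1}\cdots a_{i_k}]\pmod{t^2},$$
and summing over the monomials of $f$ with their coefficients gives $G(t)\equiv f(a_1,\dots,a_m)+t\,[a,f(a_1,\dots,a_m)]\pmod{t^2}$. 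Comparing with $G(t)=\sum t^i G_i$ yields $G_1=[a,f(a_1,\dots,a_m)]$, and combined with $G_1\in V$ this completes the argument.

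The main potential obstacle is the telescoping calculation modulo $t^2$, which demands careful bookkeeping across an arbitrary polynomial; the trick is that each interior pair $(1-ta)(1+ta)$ collapses to $1$ modulo $t^2$, leaving only a single envelope $(1+ta)(\cdot)(1-ta)$ that produces exactly one commutator per monomial. A minor but important subtlety is that the proof must work in arbitrary characteristic: no step divides by an integer, and $K$ being infinite is used only to invoke the Vandermonde-type lemma that extracts the coefficient $G_1$ from the values $G(t)$.
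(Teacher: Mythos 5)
Your argument is correct, but note that the paper does not actually prove this statement: it is quoted as a special case of \cite[Theorem 2.3]{BresarKlep}, so you have supplied a self-contained proof where the paper supplies a citation. Your proof is essentially the standard conjugation argument behind that result: for fixed $t\in K$ each $G(t)$ is a genuine value of $f$, the expansion $G(t)=\sum_i t^iG_i$ has bounded degree in $t$, the paper's Vandermonde-type lemma (stated in the Preliminaries) extracts $G_1\in V$, and the telescoping of the interior factors $(1-ta)(1+ta)=1-t^2a^2$ correctly identifies $G_1=[a,f(a_1,\dots,a_m)]$; a possible constant term of $f$ only affects $G_0$ and commutes with everything, so it causes no trouble. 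One small point worth a sentence in a polished write-up: the statement allows $A$ to be non-unital, so the expressions $1\pm ta$ should be read in the unitization of $A$; since $(1+ta)a_i(1-ta)=a_i+t[a,a_i]-t^2aa_ia$ lies in $A$ itself, the substitution is still a legitimate evaluation of $f$ on $A$ and the argument goes through unchanged. An equivalent route, closer to the derivation viewpoint, is to perturb by $a_i\mapsto a_i+t[a,a_i]$ and observe that the coefficient of $t$ is $\delta(f(a_1,\dots,a_m))$ for the inner derivation $\delta=[a,\cdot\,]$; both versions buy the same thing, namely a characteristic-free proof using only the infiniteness of $K$.
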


A Lie ideal $I$ of $\KX$ will be called a T-Lie ideal if  $I$ is a Lie ideal of $\KX$ and if it is closed under endomorphisms of $\KX$. We say that a T-Lie ideal $I$ is generated as a T-Lie ideal by some subset $\mathcal F\subseteq \KX$, if $I$ is the smallest T-Lie ideal containing $\mathcal F$. In this case we denote it by $I=\langle \mathcal F \rangle^{TL}$.

\section{The Commutator-Degree of a Polynomial}

In this section we introduce the notion of \emph{commutator-degree} of a polynomial $f\in \KX$, and give a characterization of multilinear polynomials of commutator-degree $r$ in terms of its coefficients. 

In order to define such notion, one first needs to observe that we have a strictly descending chain of T-ideals of $\KX$,
\[\KX \supsetneqq \langle [x_1,x_2] \rangle^T\supsetneqq \langle [x_1,x_2][x_3,x_4]\rangle^T\supsetneqq\langle [x_1,x_2][x_3,x_4][x_{5},x_{6}]\rangle^T\supsetneqq \cdots\]

\begin{definition}
    Let $f\in \KX$. We say that $f$ has \emph{commutator-degree} $r$ if $f\in \langle [x_1,x_2][x_3,x_4]\cdots [x_{2r-1},x_{2r}] \rangle^T$ and $f\not \in \langle [x_1,x_2][x_3,x_4]\cdots [x_{2r+1},x_{2r+2}] \rangle^T$.
\end{definition}
For convenience we say that $f$ has commutator-degree 0 if $f\in \KX$ but $f\not\in \langle [x_1, x_2] \rangle^T$.
An immediate consequence of Theorem \ref{IDsUTn} is the following proposition.

\begin{proposition}\label{identities}
    Let $f\in \KX$. Then $f$ has commutator-degree $r$ if and only if $f\in Id(UT_r)$ and $f\not\in Id(UT_{r+1})$.
\end{proposition}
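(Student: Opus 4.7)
The plan is essentially to unwind the definitions and apply Theorem \ref{IDsUTn} directly. The key observation is that for each integer $r \geq 1$, Theorem \ref{IDsUTn} (applied with $n = r$) asserts that
\[
Id(UT_r(K)) = \langle [x_1,x_2][x_3,x_4]\cdots [x_{2r-1},x_{2r}] \rangle^T.
\]
So the descending chain of T-ideals used to define commutator-degree is literally the descending chain $\KX \supsetneqq Id(UT_1) \supsetneqq Id(UT_2) \supsetneqq \cdots$, where strictness of the inclusions is clear because $UT_{r+1}$ properly contains a subalgebra isomorphic to $UT_r$.

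With that identification in hand, the proof reduces to a rewriting. By definition, $f$ has commutator-degree $r$ precisely when it lies in $\langle [x_1,x_2]\cdots[x_{2r-1},x_{2r}]\rangle^T$ but not in $\langle [x_1,x_2]\cdots[x_{2r+1},x_{2r+2}]\rangle^T$. Substituting the T-ideals by the corresponding identity ideals, this is the same as saying $f \in Id(UT_r)$ and $f \notin Id(UT_{r+1})$, which is exactly the conclusion.

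The only case that needs separate comment is $r = 0$, since $UT_0$ does not literally appear in Theorem \ref{IDsUTn}. Here I would adopt the natural convention $Id(UT_0) := \KX$ (or equivalently, think of $UT_0$ as the zero algebra, for which every polynomial is an identity). Then $f$ has commutator-degree $0$ iff $f \in \KX = Id(UT_0)$ and $f \notin \langle[x_1,x_2]\rangle^T = Id(UT_1)$, matching the general statement. No real obstacle arises; the content of the proposition is entirely carried by Theorem \ref{IDsUTn}, and what remains is just bookkeeping.
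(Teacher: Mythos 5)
Your proof is correct and is exactly the argument the paper intends: the proposition is stated there as an immediate consequence of Theorem \ref{IDsUTn}, namely the identification $Id(UT_r)=\langle [x_1,x_2]\cdots[x_{2r-1},x_{2r}]\rangle^T$, which is precisely your rewriting. Your handling of the $r=0$ convention is a reasonable supplement consistent with the paper's own convention for commutator-degree $0$.
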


The definition of commutator degree was inspired in the following result (see \cite[Lemma 3]{deMello}). In \cite{deMello} such lemma is presented without proof, so we present its proof here for the sake of completeness.

\begin{lemma}\label{8}
	Let $A$ be a unitary algebra over $K$ and let \[f(x_1,\dots,x_m)=\sum_{\sigma\in S_n} \alpha_{\sigma} x_{\sigma(1)} \cdots x_{\sigma(m)}.\] be a multilinear polynomial in $\KX$.  
	\begin{enumerate}
		\item If $\sum_{\sigma \in S_n}\alpha_{\sigma}\neq 0$, then $f(A) = A$.
		
		\item $f\in \langle [x_1,x_2]\rangle ^T$ if and only if $\sum_{\sigma \in S_n} \alpha_{\sigma}=0$.
	\end{enumerate}
\end{lemma}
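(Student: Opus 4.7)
My plan is to handle part (1) by a single well-chosen substitution, and then derive both directions of part (2) from it together with a short syntactic manipulation in $\KX$.

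For part (1), I would exploit the unitality of $A$ in an essential way. Substitute $x_1=a$ for an arbitrary $a\in A$ and $x_2=\cdots=x_m=1$. Every monomial $x_{\sigma(1)}\cdots x_{\sigma(m)}$ then has exactly one factor equal to $a$ and $m-1$ factors equal to $1$, so it collapses to $a$. Hence
\[
f(a,1,\ldots,1)=\Bigl(\sum_{\sigma\in S_m}\alpha_\sigma\Bigr)\,a,
\]
and since the scalar in parentheses is nonzero by hypothesis, every element of $A$ lies in $f(A)$, giving $f(A)=A$.

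For the implication ``$\sum\alpha_\sigma=0\ \Rightarrow\ f\in\langle [x_1,x_2]\rangle^T$'' in part (2), the key observation is that in $\KX$ any rearrangement of a product differs from the standard product by a sum of substitution instances of $[x_1,x_2]$. Concretely, each adjacent swap $yz\mapsto zy$ costs a commutator $[y,z]$, and since every $\sigma\in S_m$ is a product of adjacent transpositions, a telescoping argument shows that
\[
x_{\sigma(1)}\cdots x_{\sigma(m)} - x_1\cdots x_m \;\in\; \langle [x_1,x_2]\rangle^T
\]
for every $\sigma$, because the intervening commutator in each step is wrapped between two monomial factors, which is precisely what the T-ideal absorbs. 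Then I would rewrite
\[
f \;=\; \sum_{\sigma\in S_m} \alpha_\sigma\bigl(x_{\sigma(1)}\cdots x_{\sigma(m)}-x_1\cdots x_m\bigr)\;+\;\Bigl(\sum_{\sigma\in S_m}\alpha_\sigma\Bigr)x_1\cdots x_m,
\]
where the second summand vanishes by assumption and the first lies in $\langle[x_1,x_2]\rangle^T$ term by term.

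For the converse direction of (2), the quickest route is to apply part (1) with $A=K$ regarded as an algebra over itself. If $\sum\alpha_\sigma\neq 0$, then $f(K)=K\neq\{0\}$; but every polynomial in $\langle[x_1,x_2]\rangle^T$ vanishes on any commutative algebra (since $[x_1,x_2]$ does), so $f\in\langle[x_1,x_2]\rangle^T$ would force $f(K)=\{0\}$, a contradiction. I do not anticipate a genuine obstacle here: the only points that need care are that both directions use the unitality of $A$ in the substitution $x_i=1$, and that when untangling a permuted monomial into the standard one the commutator produced at each step must be kept inside a larger monomial, which is precisely what membership in the T-ideal $\langle[x_1,x_2]\rangle^T$ permits.
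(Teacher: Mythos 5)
Your proof is correct, and parts of it coincide with the paper's: part (1) is the same substitution ($m-1$ variables set to $1$, one variable carrying $a$, using that $\sum_\sigma\alpha_\sigma$ is invertible in $K$), and for the implication ``$\sum\alpha_\sigma=0\Rightarrow f\in\langle[x_1,x_2]\rangle^T$'' you use the same decomposition of $f$ into differences $x_{\sigma(1)}\cdots x_{\sigma(m)}-x_1\cdots x_m$, only proving each difference lies in the T-ideal by telescoping over adjacent transpositions rather than by the paper's induction on $m$ (which peels off the first variable using $[x_j,x_1\cdots x_{j-1}]x_{j+1}\cdots x_m$); these are interchangeable. Where you genuinely diverge is the converse direction of (2): the paper argues syntactically, reducing a multilinear element of $\langle[x_1,x_2]\rangle^T$ modulo $[xy,z]=x[y,z]+[x,z]y$ to combinations of $x_1\cdots[x_i,x_{i+1}]\cdots x_m$, each of which visibly has coefficient sum zero; you instead argue semantically, evaluating at $x_1=\cdots=x_m=1$ in the commutative unital algebra $K$, where every consequence of $[x_1,x_2]$ vanishes while $f(1,\dots,1)=\sum_\sigma\alpha_\sigma$. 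Your semantic argument is shorter and avoids any normal-form manipulation; the paper's syntactic version has the mild advantage of staying entirely inside $\KX$ and of exhibiting explicitly what the multilinear part of the T-ideal looks like, which foreshadows the coefficient computations ($\beta^{(k,T,t)}$) used later. Both are complete proofs.
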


\begin{proof}
    To prove (1) one just need to substitute $m-1$ variables by $1\in A$ and the last variable by  $(\sum_{\sigma\in S_m}\alpha_\sigma)^{-1} a$ where $a\in A$ is an arbitrary element of $A$. This implies $f(A)=A$.
    
    To prove (2) we first assume that $f\in \langle [x_1,x_2]\rangle ^T$. Then using the identity $[xy,z]=x[y,z]+[x,z]y$, it is enough to verify it for a polynomial of the form $x_1\cdots x_{i-1} [x_i,x_{i+1}]x_{i+2}\cdots x_m$, which is obvious.
    
    Finally, we assume $\sum_{\sigma\in S_m}\alpha_\sigma=0$. Then for any $\tau\in S_m$, $\alpha_{\tau}=-\sum_{\sigma\neq \tau}\alpha_\sigma$ and $f$ can be written as
    \[f(x_1, \dots, x_m)=\sum_{\substack{\sigma\in S_m \\ \sigma\neq \tau}}\alpha_{\sigma}(x_{\sigma(1)}\cdots x_{\sigma(m)}-x_{\tau(1)} \cdots x_{\tau(m)}).\]
    By the above, it is enough to prove that for each $\sigma\neq \tau$, $x_{\sigma(1)}\cdots x_{\sigma(m)}-x_{tau(1)} \cdots x_{\tau(m)}\in \langle [x_1, x_2]\rangle^T$.
    We do it by induction on $m$. If $m=2$, we have $x_1x_2-x_2x_1\in \langle[x_1,x_2]\rangle^T$. Now assume the above is true for $m-1$. Up to relabeling variables, we may assume $\tau=id$ (the identical permutation). For a given $\sigma\in S_m$, let $j=\sigma(1)$. Then we can write 
    \begin{align*}
        & x_{\sigma(1)}\cdots x_{\sigma(m)}-x_1 \cdots x_m\\
         &= x_{\sigma(1)}\cdots x_{\sigma(m)}-x_jx_1\cdots x_{j-1}x_{j+1}\cdots x_m + x_jx_1\cdots x_{j-1}x_{j+1}\cdots x_m  - x_1 \cdots x_m\\
         &= x_j(x_{\sigma(2)}\cdots x_{\sigma(m)}-x_1\cdots x_{j-1}x_{j+1}\cdots x_m) + [x_j,x_1\cdots x_{j-1}]x_{j+1}\cdots x_m.
    \end{align*}
    The induction hypothesis implies that \[x_j(x_{\sigma(2)}\cdots x_{\sigma(m)}-x_1\cdots x_{j-1}x_{j+1}\cdots x_m)\in \langle[x_1, x_2]\rangle^T,\]
    and since $[x_j,x_1\cdots x_{j-1}]x_{j+1}\cdots x_m\in \langle[x_1,x_2]\rangle^T$ trivially, we obtain \[x_{\sigma(1)}\cdots x_{\sigma(m)}-x_1 \cdots x_m\in \langle[x_1, x_2]\rangle^T\] and the lemma is proved.
\end{proof}

The above lemma is a tool to determine if a polynomial lies in the T-ideal generated by $[x_1, x_2]$ only by knowing some relations satisfied by the coefficients of $f$, i.e, it characterizes the multilinear polynomials with commutator degree 0, and shows that polynomials of commutator degree 0 have image equal to $UT_n(K)$. The aim of this section is to generalize such characterization for multilinear polynomials of commutator-degree greater than 0 in order to apply it to the solution of the Lvov-Kaplansky conjecture for upper triangular matrices.

The following can be found in \cite{FagundesdeMello}.

\begin{lemma}\label{units}
    If $f$ is a multilinear polynomial, any evaluation of $f$ on upper triangular matrix units is either zero or some nonzero multiple of an upper triangular matrix unit.
\end{lemma}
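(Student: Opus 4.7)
I expand $f = \sum_{\sigma\in S_m}\alpha_\sigma x_{\sigma(1)}\cdots x_{\sigma(m)}$ and substitute $x_k = E_{i_k,j_k}$. By the matrix-unit rule $E_{a,b}E_{c,d}=\delta_{b,c}E_{a,d}$, the contribution of each $\sigma$ is either $0$ or $\alpha_\sigma E_{i_{\sigma(1)},\,j_{\sigma(m)}}$, with nonzero contribution exactly when the chaining condition $j_{\sigma(k)}=i_{\sigma(k+1)}$ holds for all $k=1,\ldots,m-1$. Hence the whole evaluation is a $K$-linear combination of (a priori possibly distinct) matrix units, and the lemma reduces to showing that every valid $\sigma$ produces the \emph{same} matrix unit $E_{a,b}$; once this is known, the evaluation collapses to $\bigl(\sum_{\sigma\text{ valid}}\alpha_\sigma\bigr)E_{a,b}$, which is $0$ or a nonzero scalar multiple of $E_{a,b}$.

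I would interpret the data as a directed multigraph on $\{1,\ldots,n\}$ whose edges are the pairs $i_k\to j_k$. A valid permutation then corresponds to ordering the edges along a directed Eulerian trail, and the resulting product is $E_{a,b}$ with $a,b$ the endpoints of that trail. A telescoping computation yields
\[
j_{\sigma(m)} - i_{\sigma(1)} \;=\; \sum_{k=1}^{m}(j_k - i_k),
\]
so the gap $b-a$ depends only on the input and not on $\sigma$. For $a$ itself I would invoke the standard degree criterion for directed Eulerian trails: either exactly one vertex has out-degree exceeding its in-degree by one --- which must be the starting vertex of every such trail --- or all vertex degrees are balanced and every trail is a closed circuit.

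In the first case $a$ is forced, and uniqueness of the end vertex follows analogously (or from the telescoping identity). In the balanced case one has $a=b$ for every valid trail, hence $\sum_k(j_k-i_k)=0$; but each summand is nonnegative because $i_k\leq j_k$, so every edge must be a loop, and the chaining condition then forces all $i_k$ to take a single common value, which is the unique candidate for $a=b$. Either way $E_{i_{\sigma(1)},j_{\sigma(m)}}$ is the same matrix unit for every valid $\sigma$, proving the lemma.

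\textbf{Main obstacle.} The only mildly delicate point is uniqueness of the starting vertex when all vertex degrees happen to be balanced; the upper triangular hypothesis $i_k\leq j_k$ converts this into a one-line nonnegativity argument, so no serious obstruction is anticipated.
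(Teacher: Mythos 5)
Your argument is correct, but it is packaged quite differently from the source. The paper itself does not prove Lemma \ref{units} (it is quoted from \cite{FagundesdeMello}), and the argument it relies on appears inline in the proof of Lemma \ref{is}: there one separates the strictly upper triangular units $E_{i_1,j_1},\dots,E_{i_k,j_k}$ from the diagonal ones, observes that a nonzero product forces the chain $i_1<j_1=i_2<j_2=\cdots=i_k<j_k$ (whose ordering is unique because the $i_l$ are strictly increasing) with the diagonal units pinned to the vertices $i_1,\dots,i_k,j_k$ of that chain, and concludes that every nonvanishing monomial equals $E_{i_1,j_k}$. You instead encode the data as a directed multigraph, identify valid monomials with Eulerian trails, and get uniqueness of the endpoints from the degree criterion plus the telescoping identity $j_{\sigma(m)}-i_{\sigma(1)}=\sum_k(j_k-i_k)$; your treatment of the balanced case (all edges are loops at a single vertex, by nonnegativity of $j_k-i_k$) correctly covers the degenerate situation where no strictly upper triangular unit occurs, which the paper's phrasing handles by allowing $k=0$. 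The paper's route is more elementary and delivers the explicit normal form of the chain, which is then reused directly in Lemmas \ref{is} and \ref{isnot}; yours is a clean self-contained proof of the lemma as stated and would generalize to settings where the "chain" is not forced to be monotone, but it does not by itself produce the explicit description of $S(k,T,t)$ that the subsequent lemmas need. Both are valid; no gap.
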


Let now $f=f(x_1, \dots, x_m) \in \KX$ be a multilinear  polynomial and write it as \[f(x_1,\dots,x_m)=\sum_{\sigma\in S_n} \alpha_\sigma x_{\sigma(1)}\cdots x_{\sigma(m)}.\]

We now define some constants depending on the coefficients of $f$ and some subsets of $\{1,\dots,m\}$.

If $k\leq m$, let $h_1,\cdots,h_k\in \{1,2,\cdots,m\}$,  such that $h_1+\cdots+h_k\leq m$, let $t=(t_1,\cdots,t_k)$ be a tuple of elements in $\{1,2,\cdots,m\}$, with $t_1<t_2<\cdots <t_k$ and let $T=(T_1,T_2,\cdots,T_k)$ such that for all $j$, $T_j\subset \{1,2,\cdots,m\}$, $T_i\cap T_j=\emptyset$, if $i\neq j$ and $|T_i|=h_i-1,$ for $i\in \{1, 2, \dots, k\}.$ 

If $k\geq 1$, let us denote by $S(k,T,t)$ the subset of $S_m$ consisting of all permutations $\sigma$ satisfying:
\begin{itemize}  
    \item $\sigma(\{1,2,\cdots,h_1-1\})=T_1$
    \item $\sigma(h_1)=t_1$
    \item for $i\in \{2,\dots, k\}$, $\sigma(\{h_1+\cdots +h_{i-1}+1, \dots, h_1+\cdots + h_i-1\})=T_i$
    \item for $i\in \{2,\dots, k\}$, $\sigma(h_1+\cdots +h_i)=t_i$
\end{itemize}
If $k=0$, we consider $S(0)=S_m$
where $|T_i|=h_i-1$.

And now consider the following sums of coefficients of $f$:
\[\beta^{(0)}=\sum_{\sigma \in S_m}\alpha_{\sigma} \ \ \ \text{ and } \ \ \  \beta^{(k,T,t)}=\sum_{\sigma \in S(k,T,t)}\alpha_{\sigma}\]

The next two lemmas show how the above constants can be used to determine whether the multilinear polynomial $f$ is an identity for upper triangular matrices of a given order or not. In particular, the next is a generalization of Lemma \ref{8}.

\begin{lemma}\label{is}
    Let $f(x_1,\dots,x_m)=\sum_{\sigma\in S_n} \alpha_\sigma x_{\sigma(1)}\cdots x_{\sigma(m)}$ be a multilinear polynomial in $\KX$. If $\beta^{(k, T, t)}=0$, for all $k< n$, and for any $T$, and $t$, then $f\in Id(UT_n)$.
\end{lemma}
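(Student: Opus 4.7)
The plan is to reduce $f \in Id(UT_n)$ to checking that $f$ vanishes on every matrix unit substitution, and then to show that any such evaluation collapses to a single coefficient sum $\beta^{(k,T,t)}$ with $k < n$, which is zero by hypothesis.

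Since $f$ is multilinear and the matrix units $E_{i,j}$ with $i \leq j$ form a basis of $UT_n(K)$, it suffices to show that $f(E_{a_1,b_1}, \ldots, E_{a_m,b_m}) = 0$ for every choice of $a_i \leq b_i$ in $\{1, \ldots, n\}$. Expanding,
\[ f(E_{a_1,b_1}, \ldots, E_{a_m,b_m}) = \sum_{\sigma \in S_m} \alpha_\sigma\, E_{a_{\sigma(1)},b_{\sigma(1)}} \cdots E_{a_{\sigma(m)},b_{\sigma(m)}}, \]
and by Lemma \ref{units} each monomial is either zero or equal to $E_{a_{\sigma(1)}, b_{\sigma(m)}}$; the latter occurs precisely when the chain condition $b_{\sigma(j)} = a_{\sigma(j+1)}$ holds for all $j = 1, \ldots, m-1$. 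If no permutation satisfies this condition, the evaluation is trivially zero, so assume at least one compatible $\sigma$ exists.

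Its non-decreasing level sequence $\ell_1, \ldots, \ell_{m+1}$ (where $\ell_j = a_{\sigma(j)}$ for $j \leq m$ and $\ell_{m+1} = b_{\sigma(m)}$) takes values in $\{1, \ldots, n\}$, so the set of distinct values is $c_0 < c_1 < \cdots < c_k$ with $k \leq n - 1$. The key step is to verify that every compatible $\sigma$ yields the same $(c_0, \ldots, c_k)$ and has a rigid block structure: each strict variable (with $a_i < b_i$) must sit at one of the $k$ jump positions, and compatibility forces each transition $(c_{i-1}, c_i)$ in the chain to be realized by the unique variable of the substitution having that $(a, b)$-type, so the $i$-th jump index $t_i$ is determined; the diagonals at level $c_{i-1}$ fill the $h_i - 1$ positions preceding the $i$-th jump (in any order), where $T_i$ is their index set and $h_i = |T_i| + 1$, and the remaining diagonals at level $c_k$ fill the tail. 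Therefore the set of compatible permutations coincides with $S(k, T, t)$, and
\[ f(E_{a_1,b_1}, \ldots, E_{a_m,b_m}) = \beta^{(k,T,t)}\, E_{c_0, c_k}. \]

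Since $k \leq n - 1 < n$, the hypothesis yields $\beta^{(k,T,t)} = 0$, completing the proof. The main obstacle I anticipate is the bookkeeping in the previous paragraph: one must carefully check that if the substitution contains two strict variables of the same type, a diagonal at a ``wrong'' level, or strict variables whose $(a,b)$-types cannot be chained end-to-end, then no compatible $\sigma$ exists (putting us in the trivial case); and one must reconcile the ordering convention $t_1 < \cdots < t_k$ of the definition of $S(k,T,t)$ with the chronological order of the jump variables in the chain, relabeling the variables of $f$ if necessary, which is harmless because the hypothesis is assumed for all $T$ and $t$.
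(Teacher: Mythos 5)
Your proposal is correct and follows essentially the same route as the paper's proof: reduce to matrix-unit substitutions via multilinearity, observe that the nonvanishing monomials are exactly those whose factors chain up, identify that set of permutations with $S(k,T,t)$ for some $k<n$, and conclude the evaluation equals $\beta^{(k,T,t)}E_{c_0,c_k}=0$. The bookkeeping you flag (degenerate substitutions and the ordering convention on $t$) is treated just as tersely in the paper, which simply reindexes the strict units so that they chain as $i_1<j_1=i_2<\cdots<j_k$.
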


\begin{proof}
    Since $f$ is multilinear, it is enough to verify that $f$ vanishes under substitution of variables by matrix units. So let $r_1, \dots, r_m$ be matrix units. Among them, assume that $E_{i_1,j_1}, \dots, E_{i_k,j_k}$ are the strictly upper triangular ones. If $k\geq n$, then $f$ vanishes, so we may assume $k<n$. Let $r=f(r_1, \dots, r_m)$. Up to a permutation of indexes, we may assume  
    \[i_1<j_1=i_2<j_2=\cdots = i_k<j_k\] otherwise $r=0$. For $r_l\not\in\{E_{i_1,j_1}, \dots, E_{i_k,j_k}\}$, we must have
    \[r_l\in\{E_{i_1,i_1},E_{i_2,i_2}, \dots, E_{i_k,i_k}, E_{j_k,j_k}\}\] otherwise, $r=0$.
    
    For each $\sigma\in S_m$, the product $r_{\sigma(1)}\cdots r_{\sigma(m)}$ is nonzero only if it coincides with
    \[(E_{i_1,i_1}\cdots E_{i_1,i_1})E_{i_1,i_2}(E_{i_2,i_2}\cdots E_{i_2,i_2})E_{i_2,i_3}\cdots E_{i_k,j_k}(E_{j_k,j_k}\cdots E_{j_k,j_k})=E_{i_1,j_k}.\]
    
    Thus let $t_1, \dots, t_k$ be such that $r_{t_l}=E_{i_l,j_l}$ and let \[T_l=\{i\,|\,r_i=E_{i_l,j_l}\}, \quad l=1, \dots, k.\]
    From the considerations above, we obtain 
    \begin{align*}
        f(r_1, \dots, r_m)  & =\sum_{\sigma\in S_m}\alpha_\sigma r_{\sigma(1)}\cdots r_{\sigma(m)}\\
                            & =\sum_{\sigma\in S(k,T,t)}\alpha_\sigma r_{\sigma(1)}\cdots r_{\sigma(m)}\\
                            & =\beta(k,T,t)E_{i_1,j_k}=0.
    \end{align*}
    So, in any case, $f(r_1, \dots, r_m)=0$.
    \end{proof}

\begin{lemma}\label{isnot}
    Let $f(x_1,\dots,x_m)=\sum_{\sigma\in S_n} \alpha_\sigma x_{\sigma(1)}\cdots x_{\sigma(m)}$ be a multilinear polynomial in $\KX$ and let $n>0$. If there exist $T$, and $t$  such that $\beta^{(n, T, t)}\neq 0$, then $f\not\in Id(UT_{n+1})$.
\end{lemma}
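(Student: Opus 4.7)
The plan is to exhibit an explicit evaluation of $f$ at matrix units in $UT_{n+1}(K)$ which produces $\beta^{(n,T,t)}E_{1,n+1}$, and is therefore nonzero by hypothesis. Given the tuples $T=(T_1,\ldots,T_n)$ and $t=(t_1,\ldots,t_n)$, I substitute $r_{t_l}:=E_{l,l+1}$ for $l=1,\ldots,n$; $r_i:=E_{l,l}$ for each $i\in T_l$; and $r_i:=E_{n+1,n+1}$ for every index $i\in\{1,\ldots,m\}$ not already assigned. The motivation is that, read in the order prescribed by any $\sigma\in S(n,T,t)$, the monomial telescopes as
\[E_{1,1}^{\,h_1-1}E_{1,2}\,E_{2,2}^{\,h_2-1}E_{2,3}\cdots E_{n,n}^{\,h_n-1}E_{n,n+1}\,E_{n+1,n+1}^{\,m-h_1-\cdots-h_n}=E_{1,n+1}.\]

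The core step is the converse: for any $\sigma\in S_m$, if $r_{\sigma(1)}\cdots r_{\sigma(m)}\neq 0$, then $\sigma\in S(n,T,t)$. Since the only strictly upper triangular matrices among the $r_i$'s are $E_{1,2},E_{2,3},\ldots,E_{n,n+1}$, each appearing exactly once, a nonzero monomial must contain these $n$ factors in their natural left-to-right order, so $\sigma^{-1}(t_1)<\cdots<\sigma^{-1}(t_n)$. For the product to avoid collapsing, the diagonal factors appearing strictly between $E_{l-1,l}$ and $E_{l,l+1}$ must all equal $E_{l,l}$, the factors preceding $E_{1,2}$ must all equal $E_{1,1}$, and the factors following $E_{n,n+1}$ must all equal $E_{n+1,n+1}$. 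Using $|T_l|=h_l-1$ and counting positions, this forces $\sigma^{-1}(t_l)=h_1+\cdots+h_l$ for every $l$ and forces $\sigma$ to biject the blocks $\{h_1+\cdots+h_{l-1}+1,\ldots,h_1+\cdots+h_l-1\}$ onto $T_l$ (with the ``extra'' indices necessarily filling the positions after $h_1+\cdots+h_n$). This is exactly the definition of $S(n,T,t)$.

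Combining the two directions,
\[f(r_1,\ldots,r_m)=\sum_{\sigma\in S(n,T,t)}\alpha_\sigma E_{1,n+1}=\beta^{(n,T,t)}E_{1,n+1}\neq 0,\]
so $f\notin Id(UT_{n+1})$, as desired. The only delicate point is the combinatorial forcing in the previous paragraph, but as it is the exact mirror of the counting argument used in the proof of Lemma \ref{is}, it should not pose any real obstacle; the hypothesis $n>0$ is used to guarantee that at least one strictly upper triangular matrix unit is present, which is what allows the output to be a strictly upper triangular matrix rather than a scalar.
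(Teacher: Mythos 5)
Your proof is correct and follows essentially the same route as the paper's: the identical evaluation (the $t_l$-th variable to $E_{l,l+1}$, the $T_l$-indexed variables to $E_{l,l}$, the rest to $E_{n+1,n+1}$), yielding $\beta^{(n,T,t)}E_{1,n+1}\neq 0$. The only difference is that you spell out the combinatorial forcing showing that precisely the monomials with $\sigma\in S(n,T,t)$ survive, which the paper states without proof.
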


\begin{proof}
    Let us consider the following evaluation of $f$ on matrix units of $UT_{n+1}$:
    \begin{itemize}
        \item $x_{t_i}\mapsto E_{i,i+1}$, for $i\in\{1,\dots,n\}$.
        \item $x_i\mapsto E{i,i}$, if $i\in T_i$. 
        \item $x_i\mapsto E_{n+1,n+1}$ otherwise.
    \end{itemize}
    Under such evaluation, the only non-vanishing monomials are those indexed by $\sigma \in S(n,T,t)$, and for any such $\sigma$ the resulting evaluation is $E_{1,n+1}$. Under such evaluation $f$ will result in $(\sum_{\sigma\in S(n,T,t)}\alpha_\sigma)E_{1,n+1}=\beta^{(n,T,t)}E_{1,n+1}$, which is nonzero. Then $f\not \in Id(UT_{n+1})$.
\end{proof}

As an immediate consequence of Lemma \ref{is} and Lemma \ref{isnot}, we have the main result of this section, which is a characterization of multilinear polynomials of commutator-degree $r$ in terms of its coefficients.


\begin{theorem}\label{equivalent}
    Let $f(x_1,\dots,x_m)=\sum_{\sigma\in S_n} \alpha_\sigma x_{\sigma(1)}\cdots x_{\sigma(m)}$ be a multilinear polynomial in $\KX$. Then the following assertions are equivalent.
    \begin{enumerate}
        \item The polynomial $f$ has commutator-degree $r\geq 1$
        \item $f\in Id(UT_r)\setminus Id(UT_{r+1})$
        \item For all $k< r$, and for any $T$, and $t$, we have $\beta^{(k, T, t)}=0$ and there exist $T$, and $t$  such that $\beta^{(r, T, t)}\neq 0$
    \end{enumerate}
\end{theorem}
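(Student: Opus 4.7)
The plan is to observe that the equivalence $(1)\Leftrightarrow(2)$ is exactly the content of Proposition \ref{identities}, so the entire substance of the theorem lies in the equivalence $(2)\Leftrightarrow(3)$, which I would extract from Lemmas \ref{is} and \ref{isnot} by carefully taking contrapositives and exploiting the fact that $UT_s$ embeds as a subalgebra of $UT_r$ whenever $s\leq r$, so $Id(UT_r)\subseteq Id(UT_s)$.

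For the direction $(3)\Rightarrow(2)$, the two clauses of $(3)$ match the hypotheses of the two lemmas verbatim. The vanishing of $\beta^{(k,T,t)}$ for every $k<r$ and every admissible $T$, $t$ is precisely the hypothesis of Lemma \ref{is} with $n=r$, giving $f\in Id(UT_r)$. The existence of $T$, $t$ with $\beta^{(r,T,t)}\neq 0$ is the hypothesis of Lemma \ref{isnot} with $n=r$, giving $f\notin Id(UT_{r+1})$. Together these yield $(2)$.

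For the direction $(2)\Rightarrow(3)$, I would argue as follows. Since $f\in Id(UT_r)$ and $UT_s\hookrightarrow UT_r$ for $s\leq r$, we have $f\in Id(UT_s)$ for every $s\leq r$. Fix $k<r$ and apply the contrapositive of Lemma \ref{isnot} with $n=k$: since $f\in Id(UT_{k+1})$, we cannot have $\beta^{(k,T,t)}\neq 0$ for any $T$, $t$, so $\beta^{(k,T,t)}=0$ for all admissible $T$, $t$. This establishes the first half of $(3)$. For the second half, use the contrapositive of Lemma \ref{is} with $n=r+1$: since $f\notin Id(UT_{r+1})$, there must exist some $k<r+1$ together with some $T$, $t$ for which $\beta^{(k,T,t)}\neq 0$. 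By what we just proved, $\beta^{(k,T,t)}=0$ whenever $k<r$, so the only possibility is $k=r$, producing the required $T$, $t$ with $\beta^{(r,T,t)}\neq 0$.

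The proof is essentially a bookkeeping argument, so there is no real obstacle; the only delicate point is keeping track of the index shift between Lemma \ref{is} (which speaks of $k<n$ to get membership in $Id(UT_n)$) and Lemma \ref{isnot} (which speaks of $\beta^{(n,\cdot)}$ to exclude membership in $Id(UT_{n+1})$), and applying the subalgebra inclusion $Id(UT_r)\subseteq Id(UT_s)$ in the correct direction when descending from $r$ to $k+1$.
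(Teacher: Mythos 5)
Your proof is correct and is exactly the bookkeeping the paper leaves implicit: it states the theorem as an immediate consequence of Proposition \ref{identities}, Lemma \ref{is} and Lemma \ref{isnot}, and your contrapositive argument together with the inclusion $Id(UT_r)\subseteq Id(UT_{k+1})$ for $k<r$ is the intended route. The only pedantic caveat is that Lemma \ref{isnot} is stated for $n>0$, so the case $k=0$ of your first half should instead invoke Lemma \ref{8}(1) (or the evaluation of all variables at $E_{1,1}\in UT_1$), which shows $\beta^{(0)}\neq 0$ forces $f\notin Id(UT_1)$.
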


To illustrate concept of commutator-degree, we compute the commutator degree of the standard polynomials.

\begin{example}
    Let $St_m(x_1,\dots, x_m)=\sum_{\sigma\in S_m}(-1)^{\sigma} x_{\sigma(1)}\cdots x_{\sigma(m)}$ be the standard polynomial of degree $m$. Then for any $r$, $St_{2r}$ and $St_{2r+1}$ have commutator-degree $r$.
\end{example}

\begin{proof}
    From the well-known Amistsur-Levitzky theorem  $St_{2m}$ is a polynomial identity for $M_m(K)$, and hence for $UT_m$. Since $St_{2m+1}$ is a consequence of $St_{2m}$, it is also an identity for $UT_m$. The staircase lemma shows that these two polynomials are not identities for $UT_{m+1}$. 
\end{proof}

Actually, the above is a particular case of a more general result (see \cite[Lemma 9.1.2]{G-Z})  that the block-triangular matrix algebra $UT(d_1, \dots, d_s)$ satisfies the standard identity $St_k$ if and only if the sizes of the blocks $d_1, \dots, d_s$ satisfy $2(d_1+\cdots+d_s)\leq k$.


The study of the standard polynomials and a characterization of its multilinear consequences in terms of its coefficients may be an interesting tool to approach the Lvov-Kaplansky conjecture for $M_n(K)$, since we know from the celebrated theorem of Amistsur-Levitzki that for any $n$, $M_n(K)$ satisfies $St_{2n}$. Although we have computed the commutator-degree of the standard polynomials, this is a too coarse measure to verify if some polynomials is a consequence of some standard identity or not. One can argue in a similar way as it was done to commutator-degree to define a more refined tool. Let us call it the \emph{St-degree} of a polynomial: consider the descending chain of T-ideals
\[\KX \supsetneqq \langle St_{2}\rangle^T \supsetneqq \langle St_{3}\rangle^T \supsetneqq \langle St_{4}\rangle^T\supsetneqq  \cdots \]

A polynomial $f$ has St-degree $k$ if $f\in \langle St_{k}\rangle^T$ and $f \not \in \langle St_{k+1}\rangle^T$.

\begin{problem}
    Characterize the St-degree of multilinear polynomials in terms of its coefficients.
\end{problem}

\section{Images of multilinear polynomials on $UT_n(K)$}

In this section we present the main result of the paper, which gives a complete solution of the Lvov-Kaplansky conjecture for the algebra of $n\times n$ upper triangular matrices, i.e., a solution for Conjecture \ref{LKUTn}. Moreover, for each possible subspace which is the image of some multilinear polynomial, we give a complete description of the set of all multilinear polynomials having such an image. 

Before that, we recall a well-known result about commutative polynomials over infinite fields.

\begin{lemma}\label{nonroot}
    Let $K$ be an infinite field and let  $f(x_1,\dots,x_m)\in K[x_1,\dots, x_m]$ be a nonzero polynomial. Then, there exists $(a_1, \dots, a_m)\in K^m$ such that $f(a_1, \dots, a_m)\neq 0$.
\end{lemma}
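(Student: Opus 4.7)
The plan is to prove this classical lemma by induction on the number of variables $m$, using the fact that a nonzero univariate polynomial over an infinite field has only finitely many roots.

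For the base case $m = 1$, I would invoke the standard fact that a nonzero polynomial $f(x_1) \in K[x_1]$ has at most $\deg(f)$ roots in $K$. Since $K$ is infinite, it contains strictly more than $\deg(f)$ elements, so at least one element $a_1 \in K$ satisfies $f(a_1) \neq 0$.

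For the inductive step, assume the result holds for polynomials in $m-1$ variables, and let $f(x_1,\dots,x_m)$ be nonzero. I would regard $f$ as a polynomial in $x_m$ with coefficients in $K[x_1,\dots,x_{m-1}]$ and write
\[
f(x_1,\dots,x_m) = \sum_{i=0}^{d} g_i(x_1,\dots,x_{m-1})\, x_m^i,
\]
where $d$ is chosen so that $g_d \neq 0$ in $K[x_1,\dots,x_{m-1}]$ (such a $d$ exists because $f \neq 0$). By the induction hypothesis, there exist $a_1,\dots,a_{m-1} \in K$ with $g_d(a_1,\dots,a_{m-1}) \neq 0$. The specialization $h(x_m) := f(a_1,\dots,a_{m-1},x_m)$ then lies in $K[x_m]$ and has leading coefficient $g_d(a_1,\dots,a_{m-1}) \neq 0$, so $h$ is a nonzero univariate polynomial. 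Applying the base case to $h$, I obtain $a_m \in K$ with $h(a_m) = f(a_1,\dots,a_m) \neq 0$, completing the induction.

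There is no real obstacle here; the only mild subtlety is the usual one of choosing the specialization of the first $m-1$ variables to preserve some nonzero coefficient so that the remaining univariate polynomial does not collapse to zero. Selecting the leading coefficient $g_d$ (in the variable $x_m$) makes this transparent. The infiniteness of $K$ is used exclusively in the base case and therefore, via the induction, at each step of the argument.
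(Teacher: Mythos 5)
Your induction on the number of variables is correct and complete: the base case uses that a nonzero univariate polynomial has at most $\deg f$ roots, and the inductive step correctly specializes the first $m-1$ variables at a point where the top coefficient $g_d$ (with respect to $x_m$) survives, so the resulting univariate polynomial is nonzero. The paper itself gives no proof of this lemma --- it is recalled as a well-known fact --- and your argument is the standard one, so there is nothing to reconcile; the only remark worth making is that the degenerate case $d=0$ (when $f$ does not involve $x_m$) is already covered by your base case, since a nonzero constant has no roots.
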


In the proof of the main result, we will need the following consequence of the above:

\begin{lemma}
    Let $K$ be an infinite field and let $f_1(x_1, \dots, x_m), \dots, f_k(x_1, \dots, x_m)$ be nonzero polynomials in $K[x_1, \dots, x_m]$. Then, there exist $a_1, \dots, a_m\in K$ such that $f_i(a_1, \dots, a_m)\neq 0$, for $i=1, \dots, k$.
\end{lemma}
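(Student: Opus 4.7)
The plan is to reduce the statement for finitely many polynomials to the single-polynomial case handled by Lemma \ref{nonroot}, using the fact that $K[x_1,\dots,x_m]$ is an integral domain.

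Concretely, I would form the product $f := f_1 f_2 \cdots f_k \in K[x_1,\dots,x_m]$. Since $K$ is a field, $K[x_1,\dots,x_m]$ has no zero divisors, and each factor $f_i$ is nonzero by hypothesis; hence $f$ is itself a nonzero polynomial. Applying Lemma \ref{nonroot} to $f$ produces a tuple $(a_1,\dots,a_m)\in K^m$ with $f(a_1,\dots,a_m)\neq 0$. Because evaluation at a point is a ring homomorphism, $f(a_1,\dots,a_m)=\prod_{i=1}^k f_i(a_1,\dots,a_m)$, and a product of elements in the field $K$ is nonzero only if every factor is nonzero; thus $f_i(a_1,\dots,a_m)\neq 0$ for every $i\in\{1,\dots,k\}$, as required.

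There is essentially no obstacle here: the only nontrivial input is Lemma \ref{nonroot} (which is where the hypothesis that $K$ is infinite is actually used), and the only algebraic fact beyond that is the integral-domain property of polynomial rings over a field. One could alternatively argue by induction on $k$, using at each step that the nonempty Zariski-open set $\{f_i\neq 0\}\subseteq K^m$ intersects the nonempty Zariski-open set $\{f_1\cdots f_{i-1}\neq 0\}$, but the product argument above is cleaner and avoids the induction bookkeeping.
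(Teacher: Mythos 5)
Your proposal is correct and follows exactly the paper's argument: take the product $f = f_1\cdots f_k$, observe it is nonzero because $K[x_1,\dots,x_m]$ is an integral domain, apply Lemma \ref{nonroot}, and note that evaluation commutes with the product so each factor must be nonzero at the resulting point. No further comment is needed.
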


\begin{proof}
    Let $f(x_1, \dots, x_m)=\prod_{i=1}^{k}f_i(x_1, \dots, x_m)$. Since each $f_i$ is a nonzero polynomial, $f$ is also a nonzero polynomial, and the above lemma guarantees the existence of elements $a_1, \dots, a_m\in K$ such that $f(a_1, \dots, a_m)\neq 0$. The lemma is proved once one observes that 
    \[\prod_{i=1}^{k}f_i(a_1, \dots, a_m) =  f(a_1, \dots, a_m)\neq 0.\]
\end{proof}

\begin{theorem}\label{main}
    Let $f\in \KX$ be a multilinear polynomial. Then the image of $f$ on $UT_n(K)$ is $J^r$ if and only if $f$ has commutator-degree $r$.   
\end{theorem}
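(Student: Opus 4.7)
The plan is to prove the forward implication ``$f$ has commutator-degree $r$ implies $f(UT_n(K))=J^r$''; the reverse implication is then automatic, since every $f$ has a unique commutator-degree and the powers $J^r$ are strictly descending until they vanish.

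For the containment $f(UT_n(K))\subseteq J^r$, I would invoke Proposition \ref{identities} together with Theorem \ref{IDsUTn}: commutator-degree $r$ means $f\in Id(UT_r(K))$, which coincides with the T-ideal generated by $c_r:=[x_1,x_2]\cdots[x_{2r-1},x_{2r}]$. Thus $f$ is a sum of expressions of the form $p\cdot c_r(g_1,\dots,g_{2r})\cdot q$. Under any evaluation in $UT_n(K)$, each factor $c_r(\dots)$ is a product of $r$ commutators, which lies in $J^r$ because $[UT_n(K),UT_n(K)]\subseteq J$; since $J^r$ is a two-sided ideal, multiplying by the outer factors $p,q$ keeps us in $J^r$.

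For the reverse containment $J^r\subseteq f(UT_n(K))$, I would proceed in two stages. In the first stage I show that every scalar multiple of every matrix unit $E_{a,b}$ with $b-a\geq r$ lies in $f(UT_n(K))$. By Theorem \ref{equivalent} there exist $T,t$ with $\beta^{(r,T,t)}\neq 0$. For any strictly increasing sequence $a=i_1<i_2<\cdots<i_{r+1}=b$ in $\{1,\dots,n\}$, I would reuse the substitution from Lemma \ref{isnot} with the indices shifted: set $x_{t_\ell}\mapsto E_{i_\ell,i_{\ell+1}}$, $x_j\mapsto E_{i_\ell,i_\ell}$ for $j\in T_\ell$, and $x_j\mapsto E_{i_{r+1},i_{r+1}}$ for every remaining $j$. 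The same bookkeeping as in Lemma \ref{isnot} kills every monomial not indexed by a permutation in $S(r,T,t)$, producing the value $\beta^{(r,T,t)}E_{a,b}\neq 0$. Multilinearity then yields all scalar multiples, and in particular the linear span of $f(UT_n(K))$ is a Lie ideal (by Theorem \ref{LieIdeal}) lying between $\{E_{a,b}:b-a\geq r\}$ and $J^r$, so it equals $J^r$.

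The second stage upgrades ``linear span is $J^r$'' to ``image is $J^r$'' via a parameterized construction. Given a target $M=\sum_{b-a\geq r}c_{a,b}E_{a,b}\in J^r$, I would substitute each $x_j$ by a sum of matrix units whose coefficients are scalar parameters, so that $f(Y_1(\lambda),\dots,Y_m(\lambda))$ has $(a,b)$-entry equal to a polynomial $P_{a,b}(\lambda)$. By the first containment, $P_{a,b}\equiv 0$ whenever $b-a<r$; for $b-a\geq r$, the first stage plus Lemma \ref{units} produces a distinguished monomial whose coefficient is a nonvanishing $\beta^{(r,T,t)}$, so $P_{a,b}$ is a nonzero polynomial in $\lambda$. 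The consequence of Lemma \ref{nonroot} established just before the theorem then allows me to pick auxiliary parameters making all the needed ``scaling coefficients'' simultaneously nonzero, and to solve for the remaining parameters so that the equations $P_{a,b}(\lambda)=c_{a,b}$ all hold.

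The main obstacle is this last step: arranging the substitution so that the entry-polynomials can be tuned independently to arbitrary prescribed values, rather than merely being nonzero. The clean strategy is to proceed by induction along the superdiagonals of $J^r$, realizing the innermost diagonal $\{E_{a,a+r}\}$ first and then correcting for higher diagonals using additional free parameters, verifying at each stage via Lemma \ref{units} that contributions to entries already fixed come from monomials whose leading coefficients are controlled by nonvanishing $\beta^{(r,T,t)}$. Once this combinatorial bookkeeping is done, the two non-vanishing lemmas for polynomials over infinite fields close the argument.
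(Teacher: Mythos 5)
Your overall architecture --- containment in $J^r$ from membership in $\langle [x_1,x_2]\cdots[x_{2r-1},x_{2r}]\rangle^T$, a nonzero $\beta^{(r,T,t)}$ producing $\beta^{(r,T,t)}E_{a,b}$ for every $b-a\geq r$, and a parameterized substitution finished off by the non-vanishing lemmas over an infinite field --- matches the paper's. Your first stage is correct, and the ``only if'' direction does follow from uniqueness of the commutator-degree together with the strictness of the chain of powers of $J$. But the step you yourself flag as the main obstacle is a genuine gap, and the device you propose to close it does not work as stated. Knowing that each entry polynomial $P_{a,b}(\lambda)$ is a nonzero polynomial only tells you that a suitable choice of parameters makes all these entries simultaneously nonzero; it does not let you hit an arbitrary prescribed tuple $(c_{a,b})$. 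Over an infinite field that need not be algebraically closed, a dominant polynomial map need not be surjective, so ``the two non-vanishing lemmas close the argument'' is not a proof without an additional structural input.

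The missing idea, which is the heart of the paper's proof, is to make the target data enter \emph{linearly}. The paper substitutes generic \emph{diagonal} matrices $D_i$ for the variables indexed by each $T_i$, generic single-superdiagonal matrices $N_1,\dots,N_{r-1}$ for $x_{t_1},\dots,x_{t_{r-1}}$, and one fully generic matrix $X=\sum_{k<l}x_{k,l}E_{k,l}$ for $x_{t_r}$. By multilinearity the entries $x_{k,l}$ of $X$ occur linearly in $f(c_1,\dots,c_m)$, so matching the $(k,k+q)$ entry of the target $A$ becomes the linear system $\sum_{j=1}^{r}\Delta_{j,q,k}\,x_{k+j-1,k+j-1+q}=a_{k,k+q}$, whose coefficients $\Delta_{j,q,k}$ are polynomials in the auxiliary parameters $d^{(i)}_j$, $y^{(i)}_j$. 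The paper then checks, via the leading coefficient $\gamma_{i_0,\mathrm{id}}=\beta^{(r,T,t)}\neq 0$ and a multidegree separation argument, that each $\Delta_{r,q,k}$ is a nonzero polynomial; only at this point are the non-vanishing lemmas invoked, to choose the auxiliary parameters so that all $\Delta_{r,q,k}$ are simultaneously nonzero, after which the triangular linear system is solved recursively for the $x_{k,l}$. Your ``induction along the superdiagonals with additional free parameters'' gestures at this, but without isolating a single variable whose substitution carries the unknowns linearly (and arranging the remaining substitutions so that the resulting linear system is triangular with provably nonzero pivots), the combinatorial bookkeeping you defer cannot be completed.
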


\begin{proof}
        
    Let $f(x_1,\dots,x_m)$ be a multilinear polynomial of commutator-degree $r$. We will show that the image of $f$ on $UT_n(K)$ is $J^r$. We may assume that $r<n$, otherwise, $f$ is an identity for $UT_n(K)$.
    
    Let $t=(t_1,\dots, t_r)$, and $T=(T_1, \dots, T_r)$ such that $\beta(r, T, t)\neq 0$. We define $T_{r+1}=\{1,\dots, m\}\setminus (T_1\cup \cdots \cup T_r \cup \{t_1, \dots, t_r\})$.
    
    Given $A
    \in J^r$, we can write \[A=\sum_{q=r}^{n-1}\sum_{k=1}^{n-q} a_{k,k+q}E_{k,k+q}\] We will show that there exists a suitable evaluation of variables $x_1\mapsto c_1, \dots, x_m\mapsto c_m$ such that $f(c_1,\dots, c_m)=A$.
    
    Before that, we will consider some matrices with entries in the polynomial algebra $K[d_j^{(i)}, y_{j}^{(i)}, x_{k, l}]$:
    \begin{itemize}
        \item $D_i=\sum_{j=1}^n d_{j}^{(i)}E_{j,j}$, for $i\in \{1, \dots, r+1\}$.
        \item $N_i=\sum_{j=1}^{n-1}y_j^{(i)}E_{j,j+1}$, for $i\in \{1, \dots, r\}$.
        \item $X=\sum_{k<l}x_{k,l}E_{k,l}$.
    \end{itemize} 
    
    Now we define the matrices $c_i$ depending on variables $d_j^{(i)}$, $y_{j}^{(i)}$ and $x_{k, l}$, so that we later can apply substitutions of such variables by elements of $K$ yielding a solution to the equation $f(c_1, \dots, c_m)=A$.

    The evaluation is given as follows:
    
    \begin{itemize}
        \item $c_i=D_i$, if $i\in T_i$, for $i\in \{1,\dots, r+1\}$.
        \item $c_{t_i}=N_i$, if $i\in \{1, \dots, r-1\}$.
        \item $c_{t_r}=X$.
    \end{itemize}
    Observe that if $r=1$, the matrices $N_i$ do not appear.
    
    Under such evaluation, since diagonal matrices commute, we obtain $f(c_1, \dots, c_m)$ is equal to
    
    \[\sum_{j=1}^{n}\sum_{i,\sigma}\gamma_{i,\sigma}D^{[i^{(1)}]}N_{\sigma(1)}D^{[i^{(2)}]}\cdots D^{[i^{(j)}]} X D^{[i^{(j+1)}]} \cdots D^{[i^{(r)}]}N_{\sigma(r)}D^{[i^{(r+1)}]},\]
    where:
    \begin{itemize}
        \item $D^{[i^{(k)}]}=D_1^{i_1^{(k)}}D_2^{i_2^{(k)}}\cdots D_{r+1}^{i_{r+1}^{(k)}}$,
        \item $\displaystyle \sum_{i,\sigma} =  \sum_{\substack{\sigma \in S_r\\ \sigma(j)=r}}  \sum_{p=1}^{r+1}\sum_{i_p^{(1)}+\cdots +i_{p}^{(r+1)}=|T_p|}$
        \item Each $\gamma_{i,\sigma}\in K$ depends on the $(r+1)^2$-tuple $i=(i_1^{(1)}, \dots, i_{r+1}^{(r+1)})$ and on $\sigma\in S_r$.
        \item If $i_0$ is the tuple satisfying $i_{q}^{(s)}=0$, for $q\neq s$, and if $id$ is the identity permutation of $S_r$, then $\gamma_{i_0, {id}}=\beta(r,T,t)$ (which is nonzero by hypothesis). 
    \end{itemize}
    
    Let us write $D^{[i^{(k)}]}=\sum_{j=1}^n \delta_j^{(k)}E_{j,j}$. Of course, for each $j$ and $k$, we have 
    \[\delta_{j}^{(k)}=\left(d_j^{(1)}\right)^{i_1^{(k)}} \cdots \left(d_j^{(r+1)}\right)^{i_{r+1}^{(k)}}.\]
    
    Computations with the above matrices yield
    \[ D^{[i^{(1)}]}N_{\sigma(1)}D^{[i^{(2)}]}\cdots N_{\sigma(j-1)}D^{[i^{(j)}]}  
    =  \sum_{k=1}^{n-j+1} \xi_{k}E_{k,k+j-1} \]
    where \[\xi_{k}=\prod_{s=1}^{j}\left(\delta_{k+s-1}^{(s)}\right)\prod_{s=1}^{j-1}\left(y_{k+s-1}^{(\sigma(s))}\right)\] and 
    \[ D^{[i^{(j+1)}]}N_{\sigma(j+1)} \cdots D^{[i^{(r)}]}N_{\sigma(r)}D^{[i^{(r+1)}]} 
    =  \sum_{k=1}^{n-r+j} \eta_{k}E_{k,k+r-j} \]
    where \[\eta_{k}=\prod_{s=1}^{r-j+1}\left(\delta_{k+s-1}^{(j+s)}\right)\prod_{s=1}^{r-j}\left(y_{k+s-1}^{(\sigma(j+s))}\right).\] 
    
    By multiplying the two expressions above with $X$ in the correct order, we obtain
    \begin{align*}
      & D^{[i^{(1)}]}N_{\sigma(1)}D^{[i^{(2)}]}\cdots D^{[i^{(j)}]} X D^{[i^{(j+1)}]} \cdots D^{[i^{(r)}]}N_{\sigma(r)}D^{[i^{(r+1)}]} \\
    = & \sum_{q=r}^{n-1}\sum_{k=1}^{n-q} \Omega_{i,j,q,k}x_{k+j-1,k+j-1+q}E_{k,k+q} 
    \end{align*}
    where
    \[\Omega_{i,j,q,k}=\left(\prod_{s=1}^{j}\delta_{k+s-1}^{(s)}\right)\left(\prod_{s=1}^{j-1} y_{k+s-1}^{(\sigma(s))}\right) 
    \left(\prod_{s=1}^{r-j+1}\delta_{k+j+q-r+s-1}^{(j+s)}\right)\left(\prod_{s=1}^{r-j} y_{k+j+q-r+s-1}^{(\sigma(j+s))}\right)\]
    
    
    If we write \[\Delta_{j,q,k}=\sum_{\substack{\sigma\in S_r \\ \sigma(j)=r}}\sum_{i}\gamma_{i,\sigma}\Omega_{j,q,k},\] we obtain
\begin{align*}
    f(c_1, \dots, c_m) & = \sum_{q=r}^{n-1}\sum_{k=1}^{n-q} \sum_{j=1}^{r} \Delta_{j,q,k}x_{k+j-1,k+j-1+q}E_{k,k+q} 
\end{align*}
    
    To obtain a solution for $f(c_1, \dots, c_m)=A$, we need to show that the following system of equations in variables $x_{k,l}$, $1\leq k<l\leq n$, has a solution.
    \[\sum_{j=1}^{r}\Delta_{j,q,k}x_{k+j-1,k+j-1+q}=a_{k, k+q},\quad q=r, \dots, n-1, k=1, \dots, n-q.\]
    
    The above equations can be written as
    \[\Delta_{r,q,k}x_{k+r-1,k+r-1+q} + \sum_{j=1}^{r-1}\Delta_{j,q,k}x_{k+j-1,k+j-1+q}=a_{k, k+q}.\]
   
    Let us assume that there exists an evaluation of variables $d_j^{(i)}$, $y_{j}^{(i)}$ by elements of $K$ such that under this evaluation  we have $\Delta_{r, q, t}\neq 0$, for each $q, k$. Then a solution of the above system of equations is obtained by considering $x_{k,l}=0,$ for $1\leq k\leq r-1$, $k+1\leq l \leq n$ and obtaining $x_{k,l}$ for all $k$ and $l$ recursively from the following formula:
    \[x_{k+r-1,k+r-1+q} = \frac{a_{k, k+q} - \sum_{j=1}^{r-1}\Delta_{j,q,t}x_{k+j-1,k+j-1+q}}{\Delta_{r,q,k}}.\]
    
    The existence of such an evaluation of variables $d_j^{(i)}$, $y_{j}^{(i)}$ by elements of $K$ such that for each $q, k$, $\Delta_{r, q, t}\neq 0$, is a consequence of Lemma \ref{nonroot}, once we prove that for each $q$ and $k$, $\Delta_{r,q,k}$ is a nonzero polynomial.
    
    To prove that, let us recall that
    \begin{align*}
        \Delta_{r,q,k} = & \sum_{\sigma\in S_{r-1}}\sum_{i}\gamma_{i,\sigma}\left(\prod_{s=1}^{r}\delta_{k+s-1}^{(s)}\right)\left(\prod_{s=1}^{r-1}y_{k+s-1}^{(\sigma(s))}\right)\delta_{k+q}^{(r+1)}\\
        = & \sum_{\sigma\in S_{r-1}}\sum_{i}\gamma_{i,\sigma}\left(\prod_{s=1}^{r}\prod_{l=1}^{r+1} \left(d_{k+s-1}^{(l)}\right)^{i_l^{(s)}}\right)\left(\prod_{s=1}^{r-1}y_{k+s-1}^{(\sigma(s))}\right)\prod_{l=1}^{r+1} \left(d_{k+q}^{(l)}\right)^{i_l^{(r+1)}}.
    \end{align*}
    
    Since \[\displaystyle \sum_{i} = \sum_{p=1}^{r+1}\sum_{i_p^{(1)}+\cdots +i_{p}^{(r+1)}=|T_p|},\] we observe that the multi-homogeneous degree of each monomial indexed by the $(r+1)^2$-tuple $i$ and by the permutation $\sigma$ is uniquely determined by $i$ and $\sigma$. Since $\gamma_{i_0, id}=\beta(r,T,t)\neq0$, the polynomial $\Delta_{r,q,k} $ is nonzero for each $q$ and $k$, and this finishes the proof of the theorem.
    \end{proof}

\section{Final Considerations}
We start the final considerations with a remark on the main theorem.

\begin{remark}
    Observe that for each possible image of a multilinear polynomial on $UT_n(K)$ (i.e., $J^r$), Theorem \ref{main} also describes the set of all multilinear polynomials with such image.
\end{remark}

Now let us consider images of not necessarily multilinear polynomials. It was shown in \cite{WangHomogeneous} that if $f$ is not a multilinear polynomial, its image may not be a vector subspace of $UT_n(K)$. Nevertheless, for each $r$, we may study the set all polynomials (not necessarily multilinear) with image contained in $J^r$. We have:

\begin{proposition}
    Assume $K$ is a field of characteristic zero. For each $r$, the set \[I^r(UT_n)=\{f\in \KX\,|\, f(UT_n)\subseteq J^r\}\] is a T-ideal of $\KX$. Moreover, for each $r$, $I^r(UT_n)= Id(UT_r)$. In particular, it is generated as a T-ideal by the polynomial $[x_1, x_2]\cdots [x_{2r-1}, x_{2r}]$. 
\end{proposition}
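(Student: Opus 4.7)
The plan is to verify the three assertions in turn, combining Theorem~\ref{IDsUTn} with the ideal structure of $J^r$ and a natural block embedding $UT_r\hookrightarrow UT_n$. For the first claim, $I^r(UT_n)$ is a T-ideal because $J^r$ is a two-sided ideal of $UT_n(K)$: for $f\in I^r(UT_n)$ and $g,h\in\KX$, every evaluation of $gfh$ on a tuple from $UT_n$ lies in $UT_n\cdot J^r\cdot UT_n\subseteq J^r$, and stability under endomorphisms of $\KX$ is immediate because substituting variables by polynomials and then evaluating in $UT_n$ is itself just another evaluation of $f$ on $UT_n$.

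For the inclusion $Id(UT_r)\subseteq I^r(UT_n)$, Theorem~\ref{IDsUTn} reduces matters to checking that the single generator $c_r=[x_1,x_2]\cdots[x_{2r-1},x_{2r}]$ lies in $I^r(UT_n)$. This is clear: for any $a,b\in UT_n(K)$, the commutator $[a,b]$ has vanishing diagonal and hence lies in $J$, so a product of $r$ such commutators lies in $J^r$.

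For the reverse inclusion $I^r(UT_n)\subseteq Id(UT_r)$, I would use the (non-unital) algebra embedding $\iota\colon UT_r\hookrightarrow UT_n$ that places a matrix $A$ in the upper-left $r\times r$ block and zeros elsewhere. For any $f\in I^r(UT_n)$ and $a_1,\dots,a_m\in UT_r$, the element $f(\iota(a_1),\dots,\iota(a_m))$ lies simultaneously in $\iota(UT_r)$, whose entries vanish unless $j\le r$, and in $J^r$, whose entries vanish unless $j-i\ge r$. These two conditions are incompatible, since together they would force $r+1\le i+r\le j\le r$; hence the evaluation is zero and $f\in Id(UT_r)$. The ``in particular'' clause then follows by one more invocation of Theorem~\ref{IDsUTn}. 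I do not foresee a serious obstacle here; the real content is packaged into Theorem~\ref{IDsUTn} and the block embedding. Worth flagging, however: none of the steps above seems to require $\mathrm{char}\,K=0$, so the statement appears to hold over an arbitrary infinite field, and the characteristic-zero hypothesis may be stronger than necessary.
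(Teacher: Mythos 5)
Your proof is correct, but it takes a genuinely different route from the paper's. The paper treats the equality $I^r(UT_n)=Id(UT_r)$ as a one-line corollary of its main machinery: in characteristic zero every T-ideal is generated by its multilinear elements, and Theorem~\ref{main} together with Theorem~\ref{equivalent} shows that a multilinear $f$ satisfies $f(UT_n)\subseteq J^r$ exactly when $f\in Id(UT_r)$, so the two T-ideals have the same multilinear part and hence coincide. This is why the characteristic-zero hypothesis appears. You instead prove the two inclusions directly: $Id(UT_r)\subseteq I^r(UT_n)$ by checking that the Maltsev generator $[x_1,x_2]\cdots[x_{2r-1},x_{2r}]$ lands in $J^r$ (each commutator of upper triangular matrices has zero diagonal, hence lies in $J$) and using that $I^r(UT_n)$ is a T-ideal; and $I^r(UT_n)\subseteq Id(UT_r)$ via the corner-block embedding $\iota\colon UT_r\hookrightarrow UT_n$ and the observation that $\iota(UT_r)\cap J^r=0$. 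Your argument is self-contained (it bypasses Theorem~\ref{main} entirely) and, as you rightly flag, needs only that $K$ is infinite (for Theorem~\ref{IDsUTn}), not that it has characteristic zero; the trade-off is that it is specific to this situation, whereas the paper gets the result for free from results it has already established. Two small points to tidy up: the identity $f(\iota(a_1),\dots,\iota(a_m))=\iota(f(a_1,\dots,a_m))$ holds because $\iota$ is a non-unital homomorphism and $f$ may be taken to have zero constant term (any $f$ with nonzero constant term evaluates at $(0,\dots,0)$ to a nonzero scalar matrix, hence is not in $I^r(UT_n)$ for $r\geq 1$); and both your argument and the paper's implicitly require $r\leq n$, since for $r>n$ one has $I^r(UT_n)=Id(UT_n)\supsetneqq Id(UT_r)$.
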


\begin{proof}
    The fact that $I^r(UT_n)$ is a T-ideal of $\KX$ follows directly from the fact that $J^r$ is an ideal of $UT_n(K)$ (and do not depend on the characteristic of $K$). 
    
    Since any T-ideal of $\KX$ is generated by its multilinear polynomials, the proposition follows directly from Theorem \ref{main} and Lemma \ref{equivalent}.
\end{proof}

The above result raises some questions when images of polynomials over matrix algebras (or even for algebras in general) are considered.

Assuming the Lvov-Kaplansky conjecture is true, the possible images of multilinear polynomials on $M_n(K)$ are $\{0\}$, $K$ (viewed as the set of scalar matrices), $sl_n(K)$ and $M_n(K)$.

The set of polynomials with image $\{0\}$ is the set of polynomial identities of $M_n(K)$, which is a T-ideal of $\KX$. The set of polynomials with image contained in $K$ is the set of central polynomials, denoted by $C(M_n(K))$, which is a T-space of $\KX$ (actually, a more suitable name should be T-subalgebra). It is a subalgebra of $\KX$ which is invariant under endomorphisms of the algebra $\KX$. Such kind of subsets of $\KX$ have been extensively studied, but in light of the Lvov-Kaplansky conjecture, one can also wonder about the set of polynomials with image contained in $sl_n(K)$. These were implicitly studied in the paper \cite{BresarKlep}. In order to make it explicit, we define the following set, which we call the set of \emph{traceless polynomials} and denote by $TL(M_n(K))$.
\[TL(M_n(K))=\{f(x_1, \dots, x_m)\in \KX\,|\, tr(f(a_1, \dots, a_m)=0\}.\]

An obvious but interesting remark is the following.

\begin{remark}
    The intersection of traceless polynomials with the central polynomials of $M_n(K))$ is the ideal of identities of $M_n(K)$. With the above notation:
    \[Id(M_n(K))=C(M_n(K))\cap TL(M_n(K)).\]
\end{remark}

We notice that $TL(M_n(K))$ has an interesting structure.

\begin{proposition}
    Let $TL(M_n(K))$ be as above. Then $TL(M_n(K))$ is a T-Lie ideal of $\KX$. Moreover, the T-Lie ideal generated by $[x_1, x_2]$ is contained in $TL(M_n(K))$, i.e.
    \[\langle[x_1, x_2] \rangle^{TL}\subseteq TL(M_n(K)).\]
    
\end{proposition}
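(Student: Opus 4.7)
The plan is to verify the three defining properties of a T-Lie ideal for $TL(M_n(K))$, and then use the minimality characterization of $\langle [x_1,x_2]\rangle^{TL}$ together with the obvious fact that $[x_1,x_2]$ itself lies in $TL(M_n(K))$.

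First, I would check that $TL(M_n(K))$ is a vector subspace of $\KX$. This is immediate from the linearity of the trace: if $f,g\in TL(M_n(K))$ and $\alpha,\beta\in K$, then for every tuple $(a_1,\dots,a_m)\in M_n(K)^m$,
\[\tr\bigl((\alpha f+\beta g)(a_1,\dots,a_m)\bigr)=\alpha\tr f(a_1,\dots,a_m)+\beta\tr g(a_1,\dots,a_m)=0.\]

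Second, I would show closure under endomorphisms of $\KX$. Any endomorphism $\varphi$ of $\KX$ is determined by a choice of polynomials $\varphi(x_i)=g_i\in\KX$, and $\varphi(f)=f(g_1,\dots,g_m)$. Evaluating this at an arbitrary tuple $(a_1,\dots,a_N)\in M_n(K)^N$ gives $f\bigl(g_1(a_1,\dots,a_N),\dots,g_m(a_1,\dots,a_N)\bigr)$, which is simply an evaluation of $f$ at the matrix tuple $\bigl(g_i(a_1,\dots,a_N)\bigr)_i$; hence its trace vanishes. Third, I would verify the Lie ideal property. Given $h\in\KX$ arbitrary and $f\in TL(M_n(K))$, we need $[h,f]\in TL(M_n(K))$. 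For any evaluation,
\[\tr\bigl([h,f](a_1,\dots,a_m)\bigr)=\tr\bigl(h(\vec a)f(\vec a)-f(\vec a)h(\vec a)\bigr)=0,\]
since the trace of any commutator of $n\times n$ matrices is zero. The three properties together establish that $TL(M_n(K))$ is a T-Lie ideal.

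For the containment $\langle [x_1,x_2]\rangle^{TL}\subseteq TL(M_n(K))$, I would simply observe that $[x_1,x_2]\in TL(M_n(K))$ (again because any matrix commutator has trace zero), and then invoke the definition of $\langle [x_1,x_2]\rangle^{TL}$ as the smallest T-Lie ideal containing $[x_1,x_2]$. Since $TL(M_n(K))$ is a T-Lie ideal containing $[x_1,x_2]$, minimality yields the inclusion.

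There is no real obstacle here; the proof is a direct verification of definitions, with the key input being the classical fact $\tr[A,B]=0$ for matrices. The only point that deserves care is distinguishing the Lie ideal condition (pairing an arbitrary $h\in\KX$ with $f\in TL(M_n(K))$) from the stronger closure that would be required for $TL(M_n(K))$ to be a T-ideal, which is not being claimed.
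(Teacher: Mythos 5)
Your proof is correct and follows essentially the same route as the paper, whose entire argument is the observation that $\mathrm{tr}([A,B])=0$ for all $A,B\in M_n(K)$; you have simply written out the routine verifications (subspace, closure under endomorphisms, Lie ideal property, and minimality of $\langle[x_1,x_2]\rangle^{TL}$) that the paper leaves implicit.
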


\begin{proof}
    It follows directly from the fact that  $tr([A,B])=0$ for any $A, B\in M_n(K)$.
\end{proof}

We now observe that this is a particular case of a more general result.

\begin{proposition}
    Let $A$ be an algebra over $K$ and let $V_f\subseteq A$ be the linear span of the image of some polynomial $f$ on $A$. Then the set 
    \[\{g\in \KX\,|\, g(A)\subseteq V_j\}\]
    is a T-Lie ideal of $\KX$.
\end{proposition}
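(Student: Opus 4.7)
The plan is to verify the three defining properties of a T-Lie ideal for the set $W := \{g \in \KX \mid g(A) \subseteq V_f\}$: namely, that $W$ is a vector subspace of $\KX$, that $[h, g] \in W$ whenever $g \in W$ and $h \in \KX$, and that $W$ is closed under endomorphisms of $\KX$.

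First I would check the vector subspace property, which is immediate: if $g_1, g_2 \in W$ and $\lambda \in K$, then for any tuple of elements of $A$, both $g_1$ and $g_2$ take values in $V_f$, and since $V_f$ is a vector space, $g_1 + \lambda g_2$ also takes values in $V_f$.

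Next, for the Lie ideal property, I would invoke Theorem \ref{LieIdeal}, which is exactly the tool needed here: it asserts that $V_f$, being the linear span of the image of a polynomial on $A$, is a Lie ideal of $A$. Consequently, for any $g \in W$, any $h \in \KX$, and any evaluation $\bar{a}$ of the variables in $A$, one has $g(\bar{a}) \in V_f$ and $h(\bar{a}) \in A$, so $[h, g](\bar{a}) = [h(\bar{a}), g(\bar{a})] \in V_f$ by the Lie ideal property. Hence $[h, g] \in W$.

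Finally, for closure under endomorphisms, note that an endomorphism $\varphi$ of $\KX$ is determined by assigning to each generator $x_i$ some polynomial $\varphi(x_i) = p_i \in \KX$. Given $g(x_1, \dots, x_m) \in W$, the polynomial $\varphi(g) = g(p_1, \dots, p_m)$ evaluated at any $\bar{a} \in A^{\ell}$ equals $g(p_1(\bar{a}), \dots, p_m(\bar{a}))$, which is an evaluation of $g$ at elements of $A$ and therefore lies in $V_f$. Thus $\varphi(g) \in W$.

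This verification is genuinely routine; I do not anticipate any real obstacle. The only point requiring conceptual input is step two, where one must remember that Theorem \ref{LieIdeal} supplies precisely the Lie ideal structure on $V_f$ needed to transfer from the algebra side to the polynomial side. The preceding proposition about $TL(M_n(K))$ is then recovered as the special case $A = M_n(K)$ with $f = [x_1, x_2]$, whose image spans $sl_n(K)$.
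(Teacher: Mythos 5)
Your proof is correct and follows the same route as the paper, which simply states that the proposition is an easy consequence of Theorem \ref{LieIdeal}; you have just written out the routine verifications (subspace, Lie-bracket closure via the Lie ideal structure of $V_f$, and invariance under endomorphisms) that the paper leaves implicit.
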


\begin{proof}
    This is an easy consequence of Theorem \ref{LieIdeal}.
\end{proof}

Last, we present a description of the set of traceless polynomials of $M_n(K)$ module $Id(M_n(K))$. To that, we recall the result below \cite[Theorem 4.5]{BresarKlep}. We say that $f$ is cyclically equivalent to a polynomial $g$ if $f-g$ is a sum of commutators in $\KX$.

\begin{theorem}
    Assume the characteristic of $K$ is zero. Let 
    $f\in \KX$, and let us write $\mathcal L= \text{span } f(M_n(K))$. Then exactly one of the following four possibilities holds:
    \begin{enumerate}
        \item $f$ is an identity of $M_n(K)$; in this case $\mathcal L = 0$;
        \item $f$ is a central polynomial of $M_n(K)$; in this case $\mathcal L=K$;
        \item $f$ is not an identity of $M_n(K)$, but is cyclically equivalent to an identity of $M_n(K)$; in this case $\mathcal L = sl_n(K)$;
        \item $f$ is not a central polynomial of $M_n(K)$ and is not cyclically equivalent to an identity of $M_n(K)$; in this case $\mathcal L = M_n(K)$.
    \end{enumerate}
\end{theorem}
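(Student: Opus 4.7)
The plan is to combine the Lie-ideal structure of $\mathrm{span}\, f(M_n(K))$ with a description of trace-zero polynomials via cyclic equivalence. By Theorem \ref{LieIdeal}, $\mathcal{L} = \mathrm{span}\, f(M_n(K))$ is a Lie ideal of $M_n(K)$; and by Herstein's classification, in characteristic zero the only Lie ideals of $M_n(K)$ are $\{0\}$, $K$, $sl_n(K)$ and $M_n(K)$. Hence $\mathcal{L}$ equals exactly one of these four subspaces, so the four cases are automatically mutually exclusive, and it only remains to match each case to the algebraic condition on $f$.

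The two extreme cases are essentially definitional: $\mathcal{L}=\{0\}$ exactly when $f\in\mathrm{Id}(M_n(K))$, and $\mathcal{L}\subseteq K$ exactly when $f$ is a central polynomial (so $\mathcal{L}=K$ in the presence of the additional hypothesis $f\notin\mathrm{Id}(M_n(K))$). The substance of the theorem is therefore the equivalence between the condition $\mathcal{L}\subseteq sl_n(K)$ (``$f$ is traceless'') and the condition that $f$ is cyclically equivalent to some element of $\mathrm{Id}(M_n(K))$; case (4) will then follow by elimination.

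One direction is immediate: if $f-g$ is a sum of commutators in $\KX$ and $g\in\mathrm{Id}(M_n(K))$, then for every tuple $A=(A_1,\dots,A_m)$ of matrices one has $f(A)=\sum_i[u_i(A),v_i(A)]$, a sum of matrix commutators, hence traceless. For the converse I would first reduce to multilinear $f$, which is valid in characteristic zero since both the T-ideal $\mathrm{Id}(M_n(K))$ and the property of being cyclically equivalent to an identity are preserved by multihomogeneous decomposition and polarization. Writing $f=\sum_{\sigma\in S_m}\alpha_\sigma x_{\sigma(1)}\cdots x_{\sigma(m)}$, modulo the commutator subspace of $\KX$ the polynomial $f$ is determined by the cyclic-class sums $\beta_C=\sum_{\sigma\in C}\alpha_\sigma$, where $C$ ranges over cyclic equivalence classes of $S_m$, and by cyclicity of the trace $\mathrm{tr}(f(A))=\sum_C \beta_C\,\mathrm{tr}(A_{\sigma_C(1)}\cdots A_{\sigma_C(m)})$ for any chosen representatives $\sigma_C$.

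The main obstacle is passing from the vanishing of $\mathrm{tr}(f(A))$ on all of $M_n(K)$ to the statement that $f$ itself — not merely its trace — is cyclically equivalent to an ordinary polynomial identity. For this I would invoke the Procesi--Razmyslov theory of trace identities of $M_n(K)$: the multilinear pure trace identities are generated by the multilinearization of the Cayley--Hamilton trace identity, and by construction each generator arises as the trace of an ordinary polynomial identity of $M_n(K)$ taken modulo commutators. Consequently the formal combination $\sum_C \beta_C\,\omega_C$ that represents $f$ modulo commutators is itself cyclically equivalent to some $g\in\mathrm{Id}(M_n(K))$, finishing the nontrivial direction and hence the theorem.
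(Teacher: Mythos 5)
First, a point of comparison: the paper does not prove this theorem at all --- it is recalled verbatim from \cite[Theorem 4.5]{BresarKlep} as background for the discussion of traceless polynomials, so you are supplying a proof where the paper supplies only a citation. Your overall architecture is nevertheless the right one, and it is essentially the one underlying the result in \cite{BresarKlep}: Theorem \ref{LieIdeal} plus Herstein's classification pins $\mathcal L$ down to one of $\{0\}$, $K$, $sl_n(K)$, $M_n(K)$; cases (1) and (2) are definitional; and everything reduces to showing that $\mathcal L\subseteq sl_n(K)$ forces $f$ to be cyclically equivalent to an identity (the converse being the trivial direction). Your reduction to multilinear $f$ and the description of $f$ modulo commutators by the cyclic-class sums $\beta_C$ are both fine in characteristic zero.

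The gap is in your last paragraph. The assertion that each generator of the pure trace identities ``by construction arises as the trace of an ordinary polynomial identity of $M_n(K)$ taken modulo commutators'' is not correct: the fundamental trace identity $\sum_{\sigma\in S_{n+1}}\mathrm{sgn}(\sigma)\,\mathrm{tr}_\sigma(x_1,\dots,x_{n+1})$ contains genuine products of traces (for instance the summand $\mathrm{tr}(x_1)\cdots\mathrm{tr}(x_{n+1})$), so as a formal trace polynomial it is not of the form $\mathrm{tr}(g)$ for any ordinary $g\in\KX$, and the same is true of its substitution consequences. What you actually need is the implication: if the function $A\mapsto \mathrm{tr}(f(A))$ vanishes identically on $M_n(K)$, then $f$ is a sum of commutators modulo $Id(M_n(K))$. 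This is precisely \cite[Theorem 5.1]{BresarKlep} (equivalently: a trace-zero element of the generic matrix algebra $\mathcal G_n$ is a sum of commutators in $\mathcal G_n$, as the paper's final remark points out), and it does not follow in one line from the Razmyslov--Procesi generation theorem; its proof requires comparing the dimension of the multilinear part of $\mathcal G_n$ modulo commutators with the dimension of the space of pure trace functions via the Procesi correspondence. Either cite that result directly --- which would make your argument complete --- or supply that dimension count; as written, the crucial step is asserted with an inaccurate justification.
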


\begin{theorem}
    Assume the characteristic of $K$ is zero. Then the T-Lie ideal $TL(M_n(K))$ is generated (module $Id(M_n(K))$) by $[x_1, x_2]$ as a TL-ideal. Or, equivalently, $TL(M_n(K))$ is the T-Lie ideal generated by $Id(M_n(K))\cup \{[x_1,x_2]\}$.
\end{theorem}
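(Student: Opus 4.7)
The plan is to establish the equality $TL(M_n(K)) = \langle Id(M_n(K)) \cup \{[x_1,x_2]\}\rangle^{TL}$ by proving both inclusions, relying on the Bresar-Klep classification theorem stated just above.

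For the easy inclusion $\langle Id(M_n(K)) \cup \{[x_1,x_2]\}\rangle^{TL} \subseteq TL(M_n(K))$: the preceding proposition already establishes that $TL(M_n(K))$ is a T-Lie ideal of $\KX$, so it suffices to show that each generator lies inside $TL(M_n(K))$. Any $g \in Id(M_n(K))$ evaluates to $0$, which is trivially traceless, and $[x_1,x_2]$ evaluates on any $A,B \in M_n(K)$ to $AB-BA$, which has trace zero. Containment follows.

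For the reverse inclusion, take $f \in TL(M_n(K))$ and apply the Bresar-Klep theorem to $f$. Writing $\mathcal{L} = \operatorname{span} f(M_n(K))$, the four possible values are $\{0\}, K, sl_n(K), M_n(K)$. The assumption that $f$ is traceless means every element of $\mathcal{L}$ has trace zero, which immediately rules out $\mathcal{L} = K$ and $\mathcal{L} = M_n(K)$. Hence only cases (1) and (3) of the theorem can occur: either $f \in Id(M_n(K))$, or $f$ is not an identity but is cyclically equivalent to some $g \in Id(M_n(K))$, i.e., $f - g = \sum_{i=1}^{N}[p_i,q_i]$ for certain $p_i, q_i \in \KX$.

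In either situation we obtain a decomposition $f = g + \sum_{i=1}^{N}[p_i, q_i]$ with $g \in Id(M_n(K))$ (taking $N=0$, $g=f$ in the identity case). Each commutator $[p_i, q_i]$ is the image of $[x_1, x_2]$ under the endomorphism of $\KX$ sending $x_1 \mapsto p_i$, $x_2 \mapsto q_i$, so $[p_i, q_i] \in \langle [x_1, x_2]\rangle^{TL}$. Summing these and adding $g$ yields $f \in \langle Id(M_n(K)) \cup \{[x_1,x_2]\}\rangle^{TL}$, completing the proof. The only substantive ingredient is the Bresar-Klep classification, which translates the global trace-vanishing hypothesis into the explicit cyclic-equivalence decomposition; the remainder is routine bookkeeping with the definition of T-Lie ideal. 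The characteristic zero hypothesis enters exactly through that theorem.
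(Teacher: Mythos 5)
Your proof is correct and follows essentially the same route as the paper: both use the Bresar--Klep classification to deduce that a traceless polynomial must be cyclically equivalent to an identity, which gives the decomposition $f = g + \sum_i [p_i,q_i]$ with $g \in Id(M_n(K))$. You merely spell out the easy inclusion and the final bookkeeping (that each $[p_i,q_i]$ is an endomorphic image of $[x_1,x_2]$) which the paper leaves implicit.
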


\begin{proof}
    This is an easy consequence of the above theorem. Indeed, if $f$ is not an identity nor cyclically equivalent to an identity of $M_n(K)$, then the linear span of $f(M_n(K))$ is $K$ or $M_n(K)$. In both cases we do not have $f(M_n(K))\subseteq sl_n(K)$. So $f$ is cyclically equivalent to an identity of $M_n(K)$. The theorem is proved.
\end{proof}

\begin{remark}
    Working module $Id(M_n(K))$ is equivalent to work in the algebra of generic $n\times n$ matrices, $\mathcal G_n$ (see \cite[Chapter 7]{Drensky} for nice exposition on the algebra of generic matrices). In this language, the last theorem says that if an element of $\mathcal G_n$ has trace zero, then it is a sum of commutators of some elements of $\mathcal G_n$, as one can see in \cite[Theorem 5.1]{BresarKlep}.
\end{remark}

\section*{Funding}
T. C. de Mello was supported by grants \#2018/15627-2, and \#2018/23690-6 São Paulo Research Foundation (FAPESP).

\end{document}